\documentclass[11pt,a4paper,reqno]{amsart}

\usepackage[T1]{fontenc}
\usepackage[utf8]{inputenc}
\usepackage{newtxtext,newtxmath}
\usepackage{microtype}
\usepackage{geometry}
\usepackage{mathtools}
\usepackage{enumitem}
\usepackage{xcolor}
\usepackage{hyperref}
\usepackage[nameinlink,capitalize,noabbrev]{cleveref}

\hypersetup{
  colorlinks=true,
  linkcolor=blue!45!black,
  citecolor=blue!45!black,
  urlcolor=blue!45!black,
  pdftitle={A Real-Analytic Counterexample to Quasiconcavity for an Ornstein--Uhlenbeck Capacitary Potential},
  pdfauthor={Lei Qin}
}
\allowdisplaybreaks
\numberwithin{equation}{section}

\newtheorem{theorem}{Theorem}[section]
\newtheorem{proposition}[theorem]{Proposition}
\newtheorem{lemma}[theorem]{Lemma}

\theoremstyle{remark}
\newtheorem{remark}[theorem]{Remark}

\newcommand{\R}{\mathbb{R}}

\newcommand{\Om}{\Omega}

\newcommand{\Ccurv}{\mathcal{C}}

\newcommand{\ep}{e_{\perp}}

\newcommand{\capa}{\mbox{\rm cap}}

\title[A counterexample to quasiconcavity for the Gaussian $p$-Capacity potentials in convex rings]
{A counterexample to quasiconcavity for the\\ Gaussian $p$-Capacity potentials in convex rings }

\author{Lei Qin}
\address{Department of Mathematics and Statistics, University of Montreal, Montreal, Quebec, Canada}
\email{lei.qin@umontreal.ca}

\subjclass[2020]{35J25, 35B06, 35A10, 52A10}
\keywords{Ornstein--Uhlenbeck operator, Capacity potential, Gaussian space, quasiconcavity. }

\begin{document}

\begin{abstract}
We construct a smooth, strictly convex ring that admit a unique solution of Gaussian $p$-Capacity problem whose super-level sets are nonconvex. This result provide a negative answer.
\end{abstract}

\maketitle

\section{Introduction}
A class of profoundly significant problems involves the study of the convex properties of level sets for solutions to partial differential equations in the class of convex rings. There are many new techniques that have appeared and been developed over these years (microscopic and macroscopic techniqque). The microscopic approach studies the second fundamental form of level hypersurfaces through differential inequalities for suitable curvature-based test functions. The central tool is the constant rank
theorem, which controls the degeneracy of this second fundamental
form by means of the strong maximum principle. See, for instance, \cite{BGMX2011}. The macroscopic approach relies on geometric envelopes and comparison
principles. Showing that the envelope is a viscosity sub(sup)-solution with
appropriate boundary behavior allows one to identify it with the
original solution, thereby establishing quasi-concavity. See for instance, \cite{BLS2009}. A classical reference text is the book by Kawohl \cite{Kawohl1985}. For further historical background, we refer the reader to these works and the references therein.

For the classical $p$ ($1<p<+\infty$) harmonic capacitary potential in a convex ring, the
convexity of level sets is a fundamental result of Lewis
\cite{Lewis1977}. In the planar case, Longinetti \cite{Longinetti1983} derived a linear elliptic
equation for the radius of curvature of the level curves and obtained a
boundary curvature estimate from the maximum principle. These results illustrate how the convexity of the
boundary components is inherited by the intermediate level
hypersurfaces. It is therefore natural to ask whether this convexity property
persists when the Lebesgue measure in the capacitary energy is
replaced by the Gaussian measure. The purpose of this paper is to show that the analogous quasiconcavity of Gaussian $p$-Capacity potentials in convex rings fails. 

Let $1<p<\infty$, define
$$
-L_{p} (u):=-\text{div}(|\nabla u|^{p-2} \nabla u)+(x,\nabla u)|\nabla u|^{p-2},\quad x\in \R^n.
$$
Here $\nabla$ denotes the Euclidean gradient. In particular, when $p=2$, $L_2$ is the Ornstein--Uhlenbeck operator. Very recently, the operator $L_p$ have attracted increasing attention. Particularly in the geometric properties of solutions and Brunn–Minkowski-type inequalities for the associated variational functionals. The first Dirichlet eigenvalue problem of the operator $L_p$ has been studied from both elliptic and parabolic perspectives, see, for instance, \cite{Carbotti2026,Andrea-Paolo2024,CQS2026,
CQS2026-2,Qin2026}. These reference give the positive answer. By contrast, recent work on the torsion problem has revealed obstructions to several natural Brunn–Minkowski-type inequalities in Gaussian space. See for instance \cite{Alina2026,SS2026}. In this paper, we concern the capacity problem for the Ornstein–Uhlenbeck operator and its nonlinear extension to Gaussian $p$-capacity. Our focus is on the geometry of capacitary potentials in convex rings. 

Let $\Om_1\Subset\Om_0$ be bounded Lipschitz domains in $\R^n$. We consider the solution of the Dirichlet problem
\begin{equation}\label{eq:OU-problem-intro}
\begin{cases}
 L_{p}u=0 & \text{in }\ \Omega:=\Om_0\setminus\overline{\Omega}_1,\\
 u=0 & \text{on }\partial\Om_0,\\
 u=1 & \text{on }\partial\Om_1.
\end{cases}
\end{equation}
The Gaussian capacity of $\Omega_1$ relative to $\Omega_0$, $\capa_\gamma(\Omega_1,\Omega_1)$ is defined by
$$
\capa_\gamma(\Omega_1,\Omega_0)=\inf\left\{ \int_\Omega|\nabla u|^2d\gamma_n:\ u\in W_0^{1,p}(\Omega_0,\gamma_n),\ u\geq 1\ \mathrm{in }\  \Omega_1\right\}.
$$
Here $\gamma_n$ denotes the Gaussian probability measure in $\R^n$, given by
$$
\gamma_n(A)= \frac{1}{(2\pi)^{n/2}} \int_{A} e^{-|x|^2/2} dx\,,\quad\text{for any measurable set }A\,.
$$
A standard variational method shows that there exists a (nontrivial) positive solution $u$ such that
$$
\capa_\gamma(\Omega_1,\Omega_0)=\int_\Omega|\nabla u|^2d\gamma_n.
$$
And the solution is unique, up to multiplication by a positive constant. The standard regularity theory in \cite{Evans,GT} shows that $u\in C^{\infty}(\Omega)\cap C^0(\overline{\Omega})$ and $u$ satisfies the boundary value problem \eqref{eq:OU-problem-intro}. Moreover, if the boundary of $\Omega$ is smooth, the solution $u$ is smooth up to the boundary.

We call the minimizers $u$ the Gaussian $p$-capacitary potentials. we extend $u$ to $1$ in $\overline{\Omega}_1$, the extended function is called quasiconcave if every superlevel set is convex. Our main results is as follows.

\begin{theorem}\label{T1}
Let $p\in(1,+\infty)$ and $n\geq 2$. Then there exist bounded domains $\Om_1\Subset\Om_0 \subset \R^n$  with smooth strictly convex doundaries such that the unique solution $u$ of \eqref{eq:OU-problem-intro} has a nonconvex superlevel set. More precisely, after extending $u$ by $1$ in $\Omega_1$, the set 
\[
\left\{ x\in \Omega_0:\ u(x)\geq \frac{1}{2}\right\}
\]
is not convex.
\end{theorem}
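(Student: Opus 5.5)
The plan is to exploit the fact that the drift term $(x,\nabla u)|\nabla u|^{p-2}$ breaks translation and dilation invariance, and to construct the counterexample in a \emph{drift-dominated} regime. I will present the steps for $p=2$ and a planar section; the same scaling works for general $p$ and in $\R^n$ (take rotationally symmetric domains about the $e_1$-axis). If $u$ solves \eqref{eq:OU-problem-intro} in a ring of size $\lambda$, then $v(y)=u(\lambda y)$ solves
\[
\Delta_p v=\lambda^2|\nabla v|^{p-2}(y,\nabla v)
\]
in a ring of unit size. For large $\lambda$ this is a singular perturbation of the transport equation $(y,\nabla v)=0$. For $p=2$ it has a probabilistic reading: $v(y)$ is the probability that the diffusion with generator $\Delta-\lambda^2 y\cdot\nabla$, started at $y$, hits $\overline\Om_1$ before $\partial\Om_0$. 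The drift pulls this diffusion strongly and radially toward the origin.

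\textbf{Configuration and limiting picture.} Let $\Om_0=B_1(0)$. Let $\Om_1$ be a fixed smooth strictly convex set, for instance a disc of radius $1/4$, translated so that $0\notin\overline\Om_1$ and $\operatorname{dist}(0,\Om_1)=\delta$ with $\delta=\kappa/\lambda$ for a parameter $\kappa>0$.

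\begin{enumerate}[label=(\roman*)]
\item For $y$ in the shadow $S=\{ty:\ y\in\Om_1,\ t\ge1\}\cap\Om_0$, the characteristic segment $[0,y]$ meets $\Om_1$. A barrier argument using sub/supersolutions built from $\lambda^2$-scaled radial profiles should give $v\to1$ locally uniformly on $S$, away from $\partial\Om_0$ and from $\partial S$.
\item For $y\notin\overline S$, the characteristic reaches an $O(1/\lambda)$-neighbourhood of the origin without touching $\Om_1$. There $v$ is essentially constant along rays and equals its value near $0$.
\end{enumerate}

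\textbf{Inner blow-up and tuning.} Next I rescale at the origin, setting $w(z)=v(z/\lambda)$. Then $w$ converges to a solution $W$ of the Ornstein--Uhlenbeck equation
\[
\Delta_p W=|\nabla W|^{p-2}(z,\nabla W)
\]
in the complement of the half-plane $H_\kappa=\{z_1\ge\kappa\}$, which is the blow-up of $\Om_1$. The boundary values are $W=1$ on $\partial H_\kappa$ and $W\to c_\kappa$ as $|z|\to\infty$, where $c_\kappa$ is determined self-consistently; it should be $0$, since far points leak to $\partial\Om_0$. In particular $W(0)\in(0,1)$ depends continuously and monotonically on $\kappa$, tending to $1$ as $\kappa\to0$. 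I then choose $\kappa$ so that the set $\{W\ge1/2\}$ is a bump $\{\,\kappa-h(z_2)\le z_1\le\kappa\,\}$ of finite width, attached to $\partial H_\kappa$, with $h>0$ on a bounded interval and $h$ vanishing at its ends.

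\textbf{Nonconvexity and smooth perturbation.} Returning to the original scale, the superlevel set $\{u\ge1/2\}$ is $C^0$-close to the union of the disc, its outward shadow cone, and a bump of width $O(1/\lambda)$ protruding toward the origin from the point of $\partial\Om_1$ nearest to $0$. Since the disc has curvature of order $1$ while the bump has size $O(1/\lambda)$, the union has reentrant corners where the bump meets $\partial\Om_1$. Explicitly, I take two points on either side of the bump, inside $\Om_1$ and near its boundary; the segment joining them passes through a region where $W<1/2$, hence $u<1/2$ for $\lambda$ large. This gives the nonconvexity required in \cref{T1}. The domains are smooth and strictly convex by construction. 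Uniqueness and the boundary regularity of $u$ come from the facts recalled before \cref{T1}.

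\textbf{Main obstacle.} The hard step is the rigorous inner/outer matching. One must show that $w\to W$ locally uniformly. One must also show that the profile $W$ on the complement of a half-plane genuinely has a bounded bump, that is, $\{W\ge1/2\}$ does not extend to infinity along $\partial H_\kappa$. I would first attempt this via explicit one-dimensional barriers in $z_1$, using the Gaussian-type function $\int_{z_1}^{\kappa}e^{s^2/(p-1)}ds$ suitably normalized, combined with comparison to show decay in $|z_2|$. If this proves delicate for general $p$, I would fall back to $p=2$, where $W$ is an explicit probability for a one-dimensional Ornstein--Uhlenbeck process in the $z_1$ variable, and then treat $p\ne2$ by the same barriers.
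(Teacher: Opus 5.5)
There is a genuine gap. It sits at the inner blow-up, which is the only place where your construction could produce nonconvexity.

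\textbf{The inner profile is trivial.} Since $v(y)=u(\lambda y)$, your $w(z)=v(z/\lambda)$ is just $u(z)$. So the inner problem is the original Ornstein--Uhlenbeck problem in $B_\lambda\setminus\overline{D_\lambda}$, where $D_\lambda$ is a disc of radius $\lambda/4$ at distance $\kappa$ from $0$, and its drift $-z$ is \emph{confining}. For $p=2$, $u(z)$ is the probability that $dZ=-Z\,dt+\sqrt2\,dB$, started at $z$, hits $\overline{D_\lambda}$ before $\partial B_\lambda$. From $|z|=O(1)$ the process reaches $D_\lambda$ in time $O(1)$. Reaching $|z|=\lambda$ instead is a large-deviation event, of probability $e^{-\lambda^2/2}$ up to polynomial factors. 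Equivalently, in the limit half-plane $\{z_1<\kappa\}$ the component $Z_1$ is a recurrent one-dimensional OU process and hits $\kappa$ almost surely. Hence the only bounded solution with $W=1$ on $\{z_1=\kappa\}$ is $W\equiv1$. The nonconstant one-dimensional solutions $1-c\int_{z_1}^{\kappa}e^{s^2/2}\,ds$ (with $e^{s^2/(2(p-1))}$ for general $p$) blow up as $z_1\to-\infty$.

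So $c_\kappa=1$, not $0$: far points do not leak to $\partial\Om_0$, and there is no $\kappa$ with $W(0)\in(0,1)$. In fact $v\to1$ locally uniformly in $B_1$. The set $\{v=1/2\}$ is therefore confined to a thin boundary layer along $\partial\Om_0$, of width about $(\ln 2)/\lambda^2$ when $p=2$. No bump attached to $\partial\Om_1$ appears.

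\textbf{No bump even if the profile were nontrivial.} The limit problem has no data depending on $z_2$. A function of $z_1$ alone solving $(p-1)W''=z_1W'$ already solves the limit equation, because the $z_2\partial_{z_2}$ part of the drift drops out. So in any class where the limit problem is well posed, the profile depends on $z_1$ alone, as your fallback remark about a one-dimensional OU process in $z_1$ concedes. Its superlevel sets are then strips, not a bounded bump with $h$ vanishing at its ends.

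More fundamentally, the leading-order drift-dominated picture is always convex. For $y_1,y_2\in\Om_1$, $t_1,t_2\ge1$ and $\mu\in[0,1]$,
\[
\mu t_1y_1+(1-\mu)t_2y_2=\tau\Bigl(\frac{\mu t_1}{\tau}\,y_1+\frac{(1-\mu)t_2}{\tau}\,y_2\Bigr),\qquad \tau=\mu t_1+(1-\mu)t_2\ge1,
\]
so the shadow $S$ is convex, and hence so is $S\cap\Om_0$. Any nonconvexity in this regime would have to come from lower-order boundary-layer corrections, which your proposal does not analyze.

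The paper avoids asymptotics altogether. It writes the equation satisfied by the radial function $r(\theta,s)$ of the level sets. It then applies Cauchy--Kowalevski with analytic data for which $\Ccurv(\cdot,0)\ge0$ vanishes at $\theta=0,\pi$ with positive definite Hessian in $(\theta,s)$, so the levels $s=\pm\delta$ are strictly convex. Finally it perturbs, by adding $\varepsilon w$ with $w=-\int_0^{x_2}e^{t^2/2}\,dt$ for $p=2$, or through the Cauchy data for general $p$. This makes $\Ccurv<0$ at $s=0$ while the two boundary levels stay strictly convex.
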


One of the crucial steps in the proofs of Theorem \ref{T1} is an equivalent radial-graph formulation of equation \eqref{eq:OU-problem-intro}. More precisely, by parametrizing the level hypersurfaces as radial graphs, the equation \eqref{eq:OU-problem-intro} is transformed into a analytic partial differential equation to which the Cauchy–Kowalevski theorem applies. A suitable choice of real-analytic Cauchy data then yields two strictly convex endpoint hypersurfaces and a nonconvex intermediate level hypersurface. The argument is direct and intuitive, and may be of independent interest. 

The proof is presented in three stages. We begin with the two-dimensional Gaussian capacity problem corresponding to $p=2$. In this case the mechanism responsible for the counterexample is particularly transparent. This planar argument provides the geometric intuition. Then, we establish the higher-dimensional counterexample. Here, the scalar curvature used in the planar case must be replaced by the second fundamental form, whose eigenvalues determine the convexity of the corresponding level hypersurface. Finally, the result is generalized to the weighted $p$-Laplacian case.

The organization of the paper is as follows. In Section 2, we list some basic facts and notations. In Section 3, we construct the counterexample for Gaussian Capacity potential in dimension 2. In Section 4, we construct the counterexample in $\R^n$ and generalize the result of Gaussian Capacity to the case of $p$-Laplacian.

\section{Preliminaries}

\subsection{Basic notation}
Let $\mathbb{R}^n$ be the $n$-dimensional Euclidean space, $x$ is a point in $\R^n$, $|x|$ is the Euclidean norm of $x$ and $dx$ denotes integration with respect to Lebesgue measure in $\R^n$.  The unit sphere in $\mathbb{R}^n$ is denoted by $\mathbb{S}^{-1}$. Let $x_0\in \R^n$ and $r>0$, we define the distance function $d(x):=r-|x-x_0|$, for each $x\in B_r(x_0)$. 

If $K$ is a convex body in $\mathbb{R}^n$ with the origin in its exterior, the radial function $\rho_{K}$ is defined by
\begin{equation}\nonumber
\rho_{K}(x)=\max \{\lambda: \lambda x \in K\}, \ \ x\in \mathbb{R}^n\setminus \{0\}.
\end{equation}

\subsection{Radial coordinates in Dimension 2}

Let $r\in C^2(\mathbb{S}^{1}\times (-\delta,\delta))$ satisfy 
\begin{equation}\label{eq:r-signs-preliminary}
 r(\theta,s)>0,
 \qquad
 r_s(\theta,s) <0,\quad \theta\in[0,2\pi)\times (-\delta,\delta)
\end{equation}
where $\delta>0$ is a real number. Define
\[
 e(\theta)=(\cos\theta,\sin\theta),
 \qquad
 \ep(\theta)=(-\sin\theta,\cos\theta),
 \qquad \theta\in [0,2\pi).
\]
In Dimension 2, the position vector can be parameterized in term of the radial function as follows
\begin{equation}\label{eq:X-def}
X(\theta,s)=r(\theta,s)e(\theta).
\end{equation}
Then, \eqref{eq:r-signs-preliminary} allows us to define a function $u$ locally by
\begin{equation}\label{eq:v-def}
u \bigl(r(\theta,s)e(\theta)\bigr)=s.
\end{equation}
Writing a point as $x=\rho e(\theta)$, the defining relation is
\begin{equation}\label{eq:implicit-relation}
\rho=r\bigl(\theta,u(\rho,\theta)\bigr).
\end{equation}
For fixed $s$, the curve $\theta\mapsto X(\theta,s)$ is oriented counterclockwise.  Since
\[
 X_\theta=r_\theta e+r\ep,
 \qquad
 X_{\theta\theta}=(r_{\theta\theta}-r)e+2r_\theta\ep,
\]
its curvature is given by
\begin{equation}\label{eq:curvature}
 \kappa(\theta,s)
 =\frac{\Ccurv(\theta,s)}{(r^2+r_\theta^2)^{3/2}},
 \qquad
 \Ccurv=r^2+2r_\theta^2-r r_{\theta\theta}.
\end{equation}
In particular, $\Ccurv>0$ at every point implies that the curve is  strictly convex. While
$\Ccurv<0$ at one point excludes convexity of curve.

\subsection{Cauchy-Kowalevski Theorem}

We now recall the analytic existence and uniqueness theorem which is used to construct counterexample. Please refer to \cite{CK-perturbed,Nirenberg1972,Walter1985} and the references cited therein.

\begin{proposition}\label{0911}
Let $x\in \R^n$, let $m\geq 1$, and suppose that $F$ is real analytic in all variables near the initial date. Consider an equation 
\[
\partial^{m}_t u=F(x,t,\{\partial_x^{\alpha } \partial^{j}_t u:\ 0\leq   j<m,|\alpha|+j\leq m\}) 
\]
with real-analytic Cauchy data
\[
\partial^{j}_t u(x,0)=\phi_j(x),\quad 0\leq j <m.
\]
Then, near every point of the initial hypersurface $t=0$ and $x$, there exists a unique real-analytic solution $u$. 
\end{proposition}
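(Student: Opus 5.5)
We follow the classical route: reduce to a first-order system, show that the Taylor coefficients of a solution are forced, and prove they converge by the method of majorants. \textbf{Reduction.} We introduce as new unknowns all derivatives $\partial_x^\alpha\partial_t^j u$ with $|\alpha|+j\le m-1$, together with $x$ and $t$ themselves, and write $U$ for the resulting vector. The given equation $\partial_t^m u=F(\dots)$ and the trivial compatibility relations then become a quasilinear first-order system
\[
\partial_t U=\sum_{i=1}^n A_i(U)\,\partial_{x_i}U+B(U),
\]
with $A_i$ and $B$ real analytic near the initial value $U_0$. The relations are of two kinds:
\[
\partial_t(\partial_x^\alpha\partial_t^j u)=\partial_x^\alpha\partial_t^{j+1}u \quad (j+1\le m-1),
\]
and, for $j=m-1$, the equation itself written via $\partial_{x_i}$ of lower unknowns. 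The restriction $j<m$, $|\alpha|+j\le m$ on the arguments of $F$ is exactly what makes the right-hand side of the original equation first order in the new unknowns. Here we use the standard device of replacing $\partial_t(\partial_x^\alpha\partial_t^j u)$, for $|\alpha|\ge1$, by $\partial_{x_i}$ applied to the unknown $\partial_x^{\alpha-e_i}\partial_t^{j+1}u$. The Cauchy data $\phi_j$ determine $U(x,0)$ analytically. Subtracting this initial value, we may assume zero data at a point, say $(x,t)=(0,0)$, and we also assume the coefficients are autonomous.

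\textbf{Uniqueness and formal solution.} Differentiating the system repeatedly in $t$ at $t=0$, every Taylor coefficient $\partial_x^\beta\partial_t^k U(0,0)$ is computed recursively from the data and from the Taylor coefficients of $A_i$ and $B$. Hence any analytic solution has a uniquely determined power series, and by analytic continuation on the connected neighbourhood it is unique. The key structural observation, proved by induction on $k$, is that each coefficient is a \emph{polynomial with nonnegative coefficients} in the Taylor coefficients of $A_i$ and $B$ (and of the data, which are zero after the reduction).

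\textbf{Convergence by majorants.} Because of this positivity, if $\tilde A_i$ and $\tilde B$ majorize $A_i$ and $B$ coefficientwise and the majorant system has a convergent solution $\tilde U$, then $|\partial^\gamma U(0)|\le\partial^\gamma\tilde U(0)$ for every multi-index, so the formal series for $U$ converges. By analyticity there are $M,r>0$ with Cauchy estimates $|\partial^\gamma A_i(0)|,|\partial^\gamma B(0)|\le M\gamma!\,r^{-|\gamma|}$. We take every entry of every $\tilde A_i$ and $\tilde B$ equal to
\[
\frac{Mr}{r-(x_1+\dots+x_n)-(U_1+\dots+U_N)}.
\]
We then seek the majorant solution in the symmetric form $\tilde U_k=v(s,t)$ with $s=x_1+\dots+x_n$. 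This reduces the system to a single scalar quasilinear equation in two variables, $v_t=\frac{Mr}{r-s-Nv}\,(nN v_s+1)$ with $v(s,0)=0$, which can be solved explicitly by characteristics (or as a quadratic relation). Its solution is analytic near the origin, and this gives convergence on a small neighbourhood.

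\textbf{Main obstacle.} The main obstacle is the bookkeeping in the reduction step: one must check that the first-order system is genuinely equivalent to the original equation, in particular that the solution of the system satisfies the compatibility relations identifying its components as derivatives of one function $u$. We handle this by showing that the differences between each component and the corresponding derivative of $u$ satisfy a homogeneous linear analytic system with zero data, so they vanish by the uniqueness already established. The positivity induction and the explicit majorant are routine once set up carefully.
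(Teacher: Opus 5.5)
Your overall plan (reduce to a first-order system, observe that the Taylor coefficients are forced and are polynomials with nonnegative coefficients, then majorize by $Mr/(r-\sum x_i-\sum U_k)$ with the symmetric ansatz $v(s,t)$) is the classical majorant proof of Cauchy--Kowalevski. It would be a self-contained substitute for what the paper does: the paper gives no proof and quotes the theorem from Nirenberg and Walter, whose fixed-point arguments in scales of weighted norms also give the continuous dependence on data used in Proposition~\ref{0909}.

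\textbf{The gap is in your reduction step.} It does not produce the quasilinear system you claim. With unknowns $\partial_x^\alpha\partial_t^ju$, $|\alpha|+j\le m-1$, the arguments of $F$ with $|\alpha|+j=m$ become $\partial_{x_i}$ of unknowns. So the last equation of your system reads
\[
\partial_t U_{0,m-1}=F\bigl(x,t,U,\partial_x U\bigr),
\]
which is first order but in general fully nonlinear in $\partial_xU$. It has the form $\sum_iA_i(U)\partial_{x_i}U+B(U)$ only if $F$ is affine in its top-order arguments. Already for $m=1$ and $u_t=(u_x)^2$, your system is just the equation itself, which is not quasilinear. The rest of your argument depends on quasilinearity: the majorant system and the scalar equation $v_t=\frac{Mr}{r-s-Nv}(nNv_s+1)$ are linear in the first derivatives, and that is what makes them explicitly solvable.

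\textbf{The fix} is the standard one: differentiate the equation once in $t$ before reducing. Write $p_{\alpha j}$ for the slot of $F$ occupied by $\partial_x^\alpha\partial_t^ju$. Take as unknowns all $\partial_x^\alpha\partial_t^ju$ with $|\alpha|+j\le m$, and replace the equation by
\[
\partial_t\bigl(\partial_t^m u\bigr)=\frac{\partial F}{\partial t}+\sum_{j<m,\ |\alpha|+j\le m}\frac{\partial F}{\partial p_{\alpha j}}\,\partial_x^\alpha\partial_t^{j+1}u .
\]
Here each $\partial_x^\alpha\partial_t^{j+1}u$ is either an unknown or $\partial_{x_i}$ of an unknown, so the system is now linear in $\partial_xU$ with analytic coefficients. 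The extra Cauchy datum is $\partial_t^mu(x,0)=F\bigl(x,0,\{\partial_x^\alpha\phi_j\}\bigr)$. Your equivalence step must then also show that $\partial_t^mu-F(\dots)$ vanishes. This holds because its $t$-derivative is zero by the differentiated equation, and it is zero at $t=0$ by the choice of datum. With this change, your positivity induction and explicit majorant go through as you describe.

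Two minor points. After subtracting $U(x,0)$, the data vanish on the whole slice $t=0$ near the origin, not merely at a point, and this is what the positivity argument needs. The resulting coefficients are also no longer autonomous in $x$; this is harmless, because your majorant already contains $\sum_i x_i$.
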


The continuous dependence of the solution $u$ with respect the perturbed Cauchy data is established in \cite[Theorem~3 and Remark~3]{Walter1985}. A more general abstract version of this result is available in \cite[Theorem 3.1]{CK-perturbed}. We shall use the following real-valued version of
Walter's theorem. It follows by extending the real-analytic
coefficients and Cauchy data to a fixed complex neighborhood,
applying Walter~\cite[Theorem~3 and Remark~3]{Walter1985},
and then restricting the resulting solution to the real
variables. 

We state below the version needed for our purposes.

\begin{proposition}\label{0909}
Let $x_0\in \R^n$ and $r>0$. let $\eta>0$ define 
\[
G_\eta:=\left\{ (t,x):|t|\leq \eta d(x),\quad x\in B_{r}(x_0) \right\},\quad d(t,x):=d(x)-\frac{|t|}{\eta}.
\]
Consider the quasi-linear system of the first order as follows: 
\[
u_t=\sum_{j=1}^n B_j(u)u_{x_j}+c(u),\quad (t,x,u)\in\ G_\eta \times (0,C_0)
\]
with the Cauchy data
\[
u(x,0)=\phi(x)\quad \mathrm{in}\ \R^n.
\]
If $B_j(u)$ and $c(u)$ are analytic with respect to $u$, and
\[
\sqrt{d(t,x)}|c|\leq \gamma,\quad d(t,x)|c(u)-c(\upsilon)|\leq \gamma'|u-\upsilon|,
\]
and
\[
|B_j|\leq \beta_j,\quad  \sqrt{d(t,x)}|B_j(u)-B_j(\upsilon)|\leq \beta'_j |u-\upsilon|,\quad j=1,\ldots, n.
\]
If there exists $\eta>0$ satisfies
\[
2\eta \sqrt{r}\left(\sum_{j}\beta_j+\gamma\right)<C_0,\quad \eta \left((3\sqrt{3}+2)\sum_{j}\beta_j\gamma+2\sum_{j}\beta_j+3\sqrt{3} \gamma\right)\leq 1,\quad 3\eta  \sum_{j}\beta_j'\leq 1.
\]
Then the quasilinear system has a unique solution $u$ in $G_\eta:=\left\{(t,x):\ |t|\leq \eta d(x),\ \forall x\in B_r(x_0)\right\}$. Moreover, if we perturb Cauchy data $\phi$ as $\phi^{\varepsilon}$, and all bounds are uniform with respect to sufficiently small $\varepsilon>0$. In addition, we assume that there exists a constant $c_0>0$ such that
\[
|\phi^{\varepsilon}-\phi| \leq c_0\varepsilon,\quad \forall\ x\in \R^n.
\]
Then $u^\varepsilon \rightarrow u$ in $C^0 (G_\eta)$, as $\varepsilon \rightarrow 0$.
\end{proposition}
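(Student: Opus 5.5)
We would prove Proposition~\ref{0909} by the abstract (Ovsyannikov--Nirenberg--Nishida) form of the Cauchy--Kowalevski theorem, working with weighted sup-norms adapted to the shrinking domains $G_\eta$, essentially as in Walter~\cite{Walter1985}. The first step is to complexify. Since $B_j$, $c$ and $\phi$ are real analytic, we extend them holomorphically to a fixed complex neighbourhood. We then replace $B_r(x_0)$ by a family of complex polydisc-type neighbourhoods $\Omega_\sigma$, $0<\sigma\le 1$, of the real ball, shrinking as $\sigma\downarrow 0$. On each $\Omega_\sigma$ we put the sup-norm $\|\cdot\|_\sigma$ over holomorphic functions. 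The only analytic input is the Cauchy inequality
\[
\|\partial_{x_j}w\|_{\sigma'}\le \frac{C}{\sigma-\sigma'}\|w\|_{\sigma},\qquad \sigma'<\sigma,
\]
which plays the role of the weight $d(x)$. The quantity $d(t,x)=d(x)-|t|/\eta$ measures the remaining room for Cauchy estimates at time $t$.

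Next we rewrite the problem as the fixed-point equation
\[
u=\mathcal T u:=\phi+\int_0^t\Bigl(\sum_j B_j(u)\,u_{x_j}+c(u)\Bigr)\,ds .
\]
We iterate $u^{k+1}=\mathcal T u^k$ starting from $u^0=\phi$, in the space of functions $w$ on $G_\eta$ with finite norm
\[
M[w]:=\sup_{G_\eta}\sqrt{d(t,x)}\,|w-\phi|
\]
and a companion norm $\sup \sqrt{d}\,|\nabla(w-\phi)|$. The key estimate is the Nagumo-type lemma: if $|w|\le K/d(t,x)^{a}$, then $\bigl|\int_0^t w\,ds\bigr|\le \frac{\eta K}{a-1}\,d^{1-a}$ for $a>1$, with a logarithmic variant at $a=1$. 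Combined with the Cauchy inequality, this gives $|\partial_x w|\lesssim M[w]/d^{3/2}$, with constant $3\sqrt3$ coming from optimising the radius in the Cauchy estimate.

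Using the hypotheses $|B_j|\le\beta_j$ and $\sqrt d|c|\le\gamma$, we then check two things. First, $\mathcal T$ maps into the range $(0,C_0)$; the first numerical condition $2\eta\sqrt r(\sum\beta_j+\gamma)<C_0$ is what keeps the iterates within the domain of analyticity. Second, the Lipschitz hypotheses on $B_j$ and $c$ give
\[
M[\mathcal T u-\mathcal T v]\le \theta\, M[u-v]
\]
with $\theta<1$ exactly when the second and third numerical inequalities hold; this is where the constants $(3\sqrt3+2)$ and $3\eta\sum\beta_j'\le1$ enter. The Banach fixed point theorem then yields existence. Uniqueness in $G_\eta$ follows from the same contraction applied to the difference of two solutions. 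Restricting the holomorphic solution to real arguments gives a real-analytic solution, which by Proposition~\ref{0911} is the Cauchy--Kowalevski solution.

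For continuous dependence, let $u^\varepsilon$ be the fixed point with data $\phi^\varepsilon$. All the constants $\beta_j,\beta'_j,\gamma,\gamma'$ are uniform in $\varepsilon$, so the contraction factor $\theta$ is uniform. Writing $u^\varepsilon-u=(\phi^\varepsilon-\phi)+(\mathcal T_\varepsilon u^\varepsilon-\mathcal T_\varepsilon u)+(\mathcal T_\varepsilon u-\mathcal T u)$, we get
\[
M[u^\varepsilon-u]\le (1-\theta)^{-1}\bigl(C\,c_0\varepsilon+M[\mathcal T_\varepsilon u-\mathcal T u]\bigr).
\]
The last term is controlled by the derivatives of $\phi^\varepsilon-\phi$, which are bounded via Cauchy estimates on the complex neighbourhood by $C c_0\varepsilon$. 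Hence $\sup_{G_\eta}\sqrt d\,|u^\varepsilon-u|\to0$, and on compact parts of $G_\eta$ this gives $C^0$ convergence. Convergence up to the boundary $d=0$ follows because $|u^\varepsilon-u|\le |\phi^\varepsilon-\phi|+\eta\,(\text{bounded integrand})\cdot d$.

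The main obstacle is the contraction estimate with the precise constants. The loss of one derivative in $B_j(u)u_{x_j}$ must be absorbed by the singular weight $\sqrt{d}$ through the Nagumo lemma. This requires choosing the Cauchy radius as a fixed fraction of $d(t,x)$, namely $d/3$, which produces the $3\sqrt3$, and tracking the cross term $\sum\beta_j\gamma$. If the sharp constants become cumbersome, we would first prove the result with unspecified absolute constants, which suffices for the applications in the paper, and then refine.
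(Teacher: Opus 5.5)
Your route is the paper's. The paper gives no argument beyond extending coefficients and data to a fixed complex neighbourhood, invoking Walter's Theorem~3 and Remark~3, and restricting to real arguments. Your sketch reconstructs Walter's Nagumo-weighted contraction, so the existence and uniqueness part is fine in outline.

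Two bookkeeping claims are off, though:
\begin{itemize}
\item The Nagumo constant for the weight $d^{-1/2}$ is $(1+p)(1+1/p)^p=\tfrac{3\sqrt3}{2}$ at $p=\tfrac12$. It is obtained with Cauchy radius $\tfrac23 d$, not $\tfrac13 d$. The constant $3\sqrt3$ only appears after multiplying by the factor $2\eta$ from integrating $d^{-3/2}$ in $t$.
\item In your norm, the term $\int_0^t(c(u)-c(v))\,ds$ contributes $2\eta\gamma'$ to $\theta$. But $\gamma'$ occurs in none of the three stated inequalities. So ``$\theta<1$ exactly when the second and third inequalities hold'' cannot come out of your scheme as written. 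You need an extra condition on $\eta\gamma'$, or a Lipschitz bound obtained from holomorphy in $u$.
\end{itemize}

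The genuine gaps are in the continuous-dependence step.

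\textbf{First gap: the perturbation bound must hold on the complex neighbourhood.} Nagumo's lemma and your Cauchy estimates act on complex slices, so you need $|\phi^\varepsilon-\phi|\le c_0\varepsilon$ on the complex neighbourhood, not merely on $\R^n$ as stated. A real bound controls neither the holomorphic extension nor any derivative, and with it alone the claim is false. Take $U_t=JU_x$ with $J=\begin{pmatrix}0&-1\\1&0\end{pmatrix}$; all bounds are trivially uniform, with $\beta_1=1$ and $\beta_1'=\gamma=\gamma'=0$. Perturbing constant data by $(\varepsilon\sin(x/\varepsilon^2),0)$ adds $\varepsilon\sin(x/\varepsilon^2)\cosh(t/\varepsilon^2)$ to the first component. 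Its supremum in $x$ tends to $\infty$ for each fixed $t\neq0$, even though $|\phi^\varepsilon-\phi|\le\varepsilon$ on $\R$.

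So the complex bound must be an explicit hypothesis. The paper assumes it tacitly, and in its application $H^\varepsilon-H$ is a polynomial of size $O(\varepsilon)$, so it holds there. Once you assume it, no derivatives of $\phi^\varepsilon-\phi$ are needed. In your unreduced formulation $\mathcal T_\varepsilon u-\mathcal T u=\phi^\varepsilon-\phi$ exactly; your three-term splitting counts this term twice. The uniform contraction then gives directly
$\sup_{G_\eta}\sqrt d\,|u^\varepsilon-u|\le(1-\theta)^{-1}\sqrt r\,c_0\varepsilon$.

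\textbf{Second gap: convergence up to $d=0$ is not established.} Under the hypotheses, the integrand $\sum_j B_j(u)u_{x_j}+c(u)$ is only $O(d^{-1/2})$, because $\sqrt d\,|c|\le\gamma$ and your companion bound on $\nabla u$ give no better. It is not bounded. Moreover, the $s$-interval has length up to $\eta d(x)$, not $\eta d(t,x)$. So nothing forces $|u^\varepsilon-u|$ to be small near $d(t,x)=0$.

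What you actually obtain is uniform convergence on each set $\{d(t,x)\ge\kappa\}$, $\kappa>0$, and, by Cauchy estimates on the complex domain, $C^k$ convergence there. This is all that Lemma~\ref{0909-1} uses, and it is a cleaner route to that lemma's $C^k$ claim than differentiating the system. But it is not convergence in $C^0$ of all of $G_\eta$.
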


\section{The planar counterexample}

In this section, we prove Theorem \ref{T1} in the case $n=2$ and $p=2$. We begin by deriving an equivalent radial-graph formulation of equation \eqref{eq:OU-problem-intro}. 

\subsection{The radial-graph equation}

Throughout this section, all angular derivatives of $u$ are taken with $\rho$  fixed. 

\begin{proposition}\label{lem:inverse-derivatives}
At a point satisfying $\rho=r(\theta,s)$, the function defined by \eqref{eq:v-def} satisfies
\begin{align*}
 u_\rho&=\frac{1}{r_s},
 &
 u_{\rho\rho}&=-\frac{r_{ss}}{r_s^3},
 \label{eq:rho-derivatives}\\[1mm]
 u_\theta&=-\frac{r_\theta}{r_s},
 &
 u_{\theta\theta}
 &=-\frac{r_{\theta\theta}}{r_s}
   +\frac{2r_\theta r_{s\theta}}{r_s^2}
   -\frac{r_\theta^2r_{ss}}{r_s^3}.
\end{align*}
\end{proposition}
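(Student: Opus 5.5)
The plan is implicit differentiation of the identity \eqref{eq:implicit-relation}, $\rho=r(\theta,u(\rho,\theta))$, which holds identically in $(\rho,\theta)$ near any point with $\rho=r(\theta,s)$. The sign condition $r_s<0$ from \eqref{eq:r-signs-preliminary} and the implicit function theorem make $u$ a $C^2$ function of $(\rho,\theta)$ locally, so the differentiations below are legitimate. Throughout, $\theta$-derivatives of $u$ are taken with $\rho$ fixed, and every derivative of $r$ is evaluated at $(\theta,u(\rho,\theta))$.

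First differentiate in $\rho$ with $\theta$ fixed: $1=r_s u_\rho$, so $u_\rho=1/r_s$. Differentiating once more in $\rho$ gives $0=r_{ss}u_\rho^2+r_s u_{\rho\rho}$. Substituting $u_\rho$ yields $u_{\rho\rho}=-r_{ss}/r_s^3$.

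Next differentiate in $\theta$ with $\rho$ fixed. The chain rule gives $0=r_\theta+r_s u_\theta$, so $u_\theta=-r_\theta/r_s$. Differentiating again in $\theta$, and remembering that each of $r_\theta$ and $r_s$ depends on $\theta$ both directly and through $u$, gives
\[
0=r_{\theta\theta}+2r_{s\theta}u_\theta+r_{ss}u_\theta^2+r_s u_{\theta\theta}.
\]
Substituting $u_\theta=-r_\theta/r_s$ and dividing by $r_s$ gives
\[
u_{\theta\theta}=-\frac{r_{\theta\theta}}{r_s}+\frac{2r_\theta r_{s\theta}}{r_s^2}-\frac{r_\theta^2 r_{ss}}{r_s^3},
\]
which is the claimed formula.

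All steps are routine. The only point needing care is the second $\theta$-derivative, where the mixed term appears twice (once from $\partial_\theta r_\theta$ and once from $\partial_\theta(r_s u_\theta)$), producing the factor $2$. The $C^2$ regularity of $r$ is exactly what is needed for these second derivatives to exist and be continuous.
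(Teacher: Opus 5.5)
Your proposal is correct and follows essentially the same route as the paper: implicit differentiation of $\rho=r(\theta,u(\rho,\theta))$ twice in $\rho$ and twice in $\theta$ (with $\rho$ fixed), arriving at $0=r_{ss}u_\rho^2+r_su_{\rho\rho}$ and $0=r_{\theta\theta}+2r_{s\theta}u_\theta+r_{ss}u_\theta^2+r_su_{\theta\theta}$ before substituting. Your remark that $r_s<0$ and the implicit function theorem make $u$ locally $C^2$ is a small justification that the paper leaves implicit.
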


\begin{proof}[\bf Proof.]
Fixed $\theta\in [0,2\pi)$, we differentiate \eqref{eq:implicit-relation} with respect to $\rho$, then
\[
 1=r_s u_\rho,\quad 0=r_{ss}u_\rho^2+r_su_{\rho\rho}.
\]
This proves the first equality. Differentiating \eqref{eq:implicit-relation} with respect to $\theta$ with fixed $\rho$, we have
\begin{equation}\label{eq:first-angular-implicit}
 0=r_\theta+r_su_\theta.
\end{equation}
Thus $u_\theta=-r_\theta/r_s$.  Differentiating
\eqref{eq:first-angular-implicit} once more with the fixed $\rho$,
due to the dependence $s=u(\rho,\theta)$, we deduce that
\[
 \left.\frac{\partial}{\partial\theta}r_\theta
 \right|_{\rho}
 =r_{\theta\theta}+r_{\theta s}u_\theta,
 \qquad
 \left.\frac{\partial}{\partial\theta}r_s
 \right|_{\rho}
 =r_{s\theta}+r_{ss}u_\theta.
\]
Consequently,
\begin{align*}
 0
 &=r_{\theta\theta}+r_{\theta s}u_\theta
   +(r_{s\theta}+r_{ss}u_\theta)u_\theta
   +r_su_{\theta\theta}\\
 &=r_{\theta\theta}+2r_{s\theta}u_\theta
   +r_{ss}u_\theta^2+r_su_{\theta\theta}.
\end{align*}
Since $u_\theta=-r_\theta/r_s$, the proof is completed.
\end{proof}

Using polar coordinates, we have
\[
 \Delta u=u_{\rho\rho}+\frac1\rho u_\rho
 +\frac1{\rho^2} u_{\theta\theta},
 \qquad
 x\cdot\nabla u=\rho u_\rho.
\]

\begin{proposition}\label{prop:evolution}
Assume that \eqref{eq:r-signs-preliminary} holds. The function $u$ defined by \eqref{eq:v-def} satisfies
\begin{equation}\label{eq:OU-v}
 -\Delta u+x\cdot\nabla u=0,\quad in\quad  \Omega
\end{equation}
if and only if $r$ satisfies
\begin{equation}\label{eq:evolution}
 (r^2+r_\theta^2)r_{ss}
 -2r_\theta r_s r_{s\theta}
 +r_s^2r_{\theta\theta}
 +(r^3-r)r_s^2=0.
\end{equation}
Equivalently, 
\begin{equation}\label{eq:evolution-normal}
 r_{ss}
 =\frac{2r_\theta r_sr_{s\theta}
       -r_s^2r_{\theta\theta}
       -(r^3-r)r_s^2}
      {r^2+r_\theta^2}.
\end{equation}
\end{proposition}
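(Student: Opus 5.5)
The plan is to substitute the formulas of \cref{lem:inverse-derivatives} into the polar form of the operator and then clear denominators. Fix $\theta$ and $s$, and set $\rho=r(\theta,s)$. By \eqref{eq:r-signs-preliminary} the map $(\theta,s)\mapsto r(\theta,s)e(\theta)$ is a local $C^2$ diffeomorphism: its Jacobian is $r\,r_s\neq0$ up to sign. Hence $u$ is well defined and $C^2$ near each such point, and every point $x\in\Omega$ in the image corresponds to exactly one pair $(\theta,s)$. The equation \eqref{eq:OU-v} holds at $x=\rho e(\theta)$ if and only if
\[
u_{\rho\rho}+\frac1\rho u_\rho+\frac1{\rho^2}u_{\theta\theta}-\rho u_\rho=0
\]
holds there, using $\Delta u$ in polar coordinates and $x\cdot\nabla u=\rho u_\rho$.

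Next I insert the four expressions $u_\rho=1/r_s$, $u_{\rho\rho}=-r_{ss}/r_s^3$, $u_\theta=-r_\theta/r_s$, and the formula for $u_{\theta\theta}$, with $\rho=r$. This gives
\[
-\frac{r_{ss}}{r_s^3}+\frac{1}{r\,r_s}+\frac{1}{r^2}\Bigl(-\frac{r_{\theta\theta}}{r_s}+\frac{2r_\theta r_{s\theta}}{r_s^2}-\frac{r_\theta^2r_{ss}}{r_s^3}\Bigr)-\frac{r}{r_s}=0 .
\]
I then multiply by $-r^2r_s^3$, which is nonzero by \eqref{eq:r-signs-preliminary}, so this step is reversible. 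The terms come out as follows:
\begin{itemize}
\item the $r_{ss}$ terms give $(r^2+r_\theta^2)r_{ss}$;
\item the mixed term gives $-2r_\theta r_sr_{s\theta}$;
\item the $r_{\theta\theta}$ term gives $+r_s^2r_{\theta\theta}$;
\item the two first-order terms give $-r\,r_s^2+r^3r_s^2=(r^3-r)r_s^2$.
\end{itemize}
Together these are exactly \eqref{eq:evolution}.

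The sign bookkeeping in the first-order terms deserves a check, because the sign of $r^3-r$ encodes the Gaussian drift. The term $\tfrac1{\rho}u_\rho$ contributes $-r\,r_s^2$ and $-\rho u_\rho$ contributes $+r^3r_s^2$, which matches. Since each step is an equality multiplied by a nonvanishing factor, the equivalence runs in both directions. Finally, $r^2+r_\theta^2\ge r^2>0$, so dividing gives the normal form \eqref{eq:evolution-normal}.

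The only genuine subtlety is justifying the "if and only if" on all of $\Omega$, not just pointwise. The point to state explicitly is that the change of variables is a bijection onto $\Omega$ on the region where $r$ is defined. The remaining work is routine algebra.
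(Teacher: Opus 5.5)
Your proposal is correct and follows the paper's proof. Like the paper, you substitute the formulas of \cref{lem:inverse-derivatives} into $u_{\rho\rho}+\rho^{-1}u_\rho+\rho^{-2}u_{\theta\theta}-\rho u_\rho=0$, multiply by the nonvanishing factor $-r^2r_s^3$ (your term-by-term bookkeeping gives exactly \eqref{eq:evolution}), and divide by $r^2+r_\theta^2>0$. On the well-definedness point you flag: global injectivity comes from the uniqueness of the polar angle together with the strict monotonicity of $s\mapsto r(\theta,s)$, which is the paper's one-line argument (the local-diffeomorphism property alone would not give it); your inverse-function-theorem remark usefully supplies the $C^2$ regularity of $u$, which the paper uses implicitly.
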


\begin{proof}[\bf Proof.]
Since $s\mapsto r(\theta,s)$ is strictly decreasing for every \(\theta\), and \(r\) is \(2\pi\)-periodic in \(\theta\), each point of the ring domain admits a unique representation \(x=r(\theta,s)e(\theta)\). Hence \(u(x):=s\) is well defined.

Since $\rho=r(\theta,s)$, Proposition \ref{lem:inverse-derivatives} and \eqref{eq:OU-v} give
\begin{align*}
 0={}&-\frac{r_{ss}}{r_s^3}
 +\frac{1}{rr_s}
 +\frac{1}{r^2}
 \left(
 -\frac{r_{\theta\theta}}{r_s}
 +\frac{2r_\theta r_{s\theta}}{r_s^2}
 -\frac{r_\theta^2r_{ss}}{r_s^3}
 \right)
 -\frac{r}{r_s}.
\end{align*}
Multiplying by $-r^2r_s^3$ gives \eqref{eq:evolution}.
Since $r^2+r_\theta^2>0$, this is equivalent to \eqref{eq:evolution-normal}.
\end{proof}

\subsection{Construction of a curve with zero-curvature point}

Here, we construct a curve that both has zero-curvature points and satisfies the equation \eqref{eq:evolution}.

Let
\[
H(\theta)=10-2\cos 2\theta,\quad Q(\theta)=-H(\theta)+7\sin \theta -\sin 3\theta,\quad \forall\ \theta\in [0,2\pi).
\]
Obviously, 
\begin{equation}\label{0902-1}
H(\theta)\geq8,
 \qquad
 Q(\theta)\leq-4
 \quad\text{for every }\theta\in\mathbb [0,2\pi).
 \end{equation}
The Cauchy-Kowalevski theorem yields the following Lemma.

\begin{lemma}\label{Application-CK}
There exists a unique solution $r(\theta,s)$ which satisfies equation \eqref{eq:evolution} 
with Cauchy data
\[
 r(\theta,0)=H(\theta),
 \qquad
 r_s(\theta,0)=Q(\theta).
\]
\begin{enumerate}
\item After decreasing $\delta>0$ if necessary,
\[
 r>0,
 \qquad
 r_s<0
 \quad\text{in }\mathbb [0,2\pi)\times [-\delta,\delta].
\]
Moreover, $r$ is a real-analytic function jointly in $\R \times [-\delta,\delta]$ with period $2\pi$ .

\item If $s=0$, we have
\begin{equation}\nonumber
\Ccurv(\theta,0)
 =
 48\sin^2\theta\,(5+\cos^2\theta)\geq 0,\quad \forall\ \theta\in [0,2\pi),
\end{equation}
where $\Ccurv$ is defined by \eqref{eq:curvature}. 

\item For each $\theta_*\in\{0,\pi\}$, we have
\[
\Ccurv (\theta_*,0)
 =\Ccurv_\theta(\theta_*,0)
 =\Ccurv_s(\theta_*,0)
 =\Ccurv_{\theta s}(\theta_*,0)=0,
\]
and
\[ \Ccurv_{\theta\theta}(\theta_*,0)=576,
 \qquad
 \Ccurv_{ss}(\theta_*,0)=1472.
\]
This implies that $D_{(\theta,s)}^2 \Ccurv(\theta_*,0)>0$.
\end{enumerate}
\end{lemma}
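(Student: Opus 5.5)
The proof has three parts: Cauchy--Kowalevski for existence and analyticity, an explicit trigonometric computation at $s=0$, and a Taylor computation at $\theta_*\in\{0,\pi\}$ that uses the equation to eliminate $s$-derivatives of order at least two.

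\textbf{Existence, periodicity and signs.} I would write \eqref{eq:evolution-normal} in the form $r_{ss}=F(r,r_\theta,r_s,r_{s\theta},r_{\theta\theta})$. Here $F$ is a rational function whose denominator is $r^2+r_\theta^2$. Since $H\ge 8$ by \eqref{0902-1}, this denominator is at least $64$ near the Cauchy data, so $F$ is real analytic there. The data $H,Q$ are trigonometric polynomials, hence real analytic. Proposition~\ref{0911} (with $m=2$, $t=s$, $x=\theta$) therefore gives a unique real-analytic solution near each point $(\theta_0,0)$.

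\begin{itemize}
\item \emph{Gluing.} Local uniqueness lets the solutions patch together on overlaps. Compactness of $[0,2\pi]$ then gives a uniform $\delta$, so $r$ is real analytic on $\R\times[-\delta,\delta]$.
\item \emph{Periodicity.} The equation is invariant under $\theta\mapsto\theta+2\pi$ and the data are $2\pi$-periodic. Hence $r(\theta+2\pi,s)$ solves the same Cauchy problem, and uniqueness gives $2\pi$-periodicity.
\item \emph{Signs.} At $s=0$ we have $r=H\ge 8$ and $r_s=Q\le -4$ by \eqref{0902-1}. By continuity and compactness, after shrinking $\delta$ we keep $r>0$ and $r_s<0$.
\end{itemize}
This proves (1).

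\textbf{Part (2).} At $s=0$ we have $r=H$, so $\Ccurv(\theta,0)=H^2+2H_\theta^2-HH_{\theta\theta}$. With $c=\cos2\theta$ we have $H_\theta=4\sin2\theta$ and $H_{\theta\theta}=8c$. Expanding gives
\[
\Ccurv(\theta,0)=132-120c-12c^2 .
\]
Substituting $c=1-2\sin^2\theta$ yields
\[
\Ccurv(\theta,0)=288\sin^2\theta-48\sin^4\theta=48\sin^2\theta\,(5+\cos^2\theta)\ge0 .
\]

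\textbf{Part (3).} The conditions $\Ccurv(\theta_*,0)=\Ccurv_\theta(\theta_*,0)=0$ and the value $\Ccurv_{\theta\theta}(\theta_*,0)=576$ follow by differentiating the formula from (2) twice.

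For the mixed derivatives I would differentiate the definition of $\Ccurv$:
\[
\Ccurv_s=2rr_s+4r_\theta r_{\theta s}-r_sr_{\theta\theta}-rr_{\theta\theta s}.
\]
The next step is to differentiate this once more in $\theta$ and once more in $s$. At $s=0$ every derivative of $r$ that has at most one $s$-derivative is a derivative of $H$ or $Q$.

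At $\theta=0$ the needed values are:
\begin{itemize}
\item $H=8$, $H'=0$, $H''=8$, $H'''=0$, $H''''=-32$;
\item $Q=-8$, $Q'=4$, $Q''=-8$, $Q'''=\ldots$, obtained by differentiating $Q=-H+7\sin\theta-\sin3\theta$.
\end{itemize}
For example, $\Ccurv_s(0,0)=-128+0+64+64=0$.

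The quantities $r_{ss}$, $r_{ss\theta}$ and $r_{ss\theta\theta}$ are not Cauchy data. I would obtain them from \eqref{eq:evolution-normal} evaluated at $s=0$, where the right-hand side is an explicit function of $\theta$ built from $H,Q$ and their derivatives. Differentiating that expression in $\theta$ (up to twice) at $\theta=0$ gives the remaining values. Substituting everything should produce $\Ccurv_{\theta s}=0$ and $\Ccurv_{ss}=1472$.

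For $\theta_*=\pi$ I would not repeat the computation. The data are invariant under $\theta\mapsto\pi-\theta$: indeed $H(\pi-\theta)=H(\theta)$, $\sin(\pi-\theta)=\sin\theta$ and $\sin3(\pi-\theta)=\sin3\theta$. Equation \eqref{eq:evolution} is invariant under $\theta\mapsto-\theta$, because $r_\theta$ and $r_{s\theta}$ both change sign and appear only through their product. By uniqueness, $r(\pi-\theta,s)=r(\theta,s)$. Therefore $\Ccurv(\pi-\theta,s)=\Ccurv(\theta,s)$, and so:
\begin{itemize}
\item the derivatives $\Ccurv$, $\Ccurv_s$, $\Ccurv_{ss}$, $\Ccurv_{\theta\theta}$ have the same values at $\pi/2\pm t$;
\item the derivatives that are odd in $\theta$ flip sign.
\end{itemize}
This reflection relates $\pi$ to $0$: the point $\theta=\pi$ is the image of $\theta=0$. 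So the values at $\pi$ are those at $0$, with $\Ccurv_\theta$ and $\Ccurv_{\theta s}$ negated, and these vanish anyway.

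Finally, the Hessian is $\mathrm{diag}(576,1472)$, so $D^2_{(\theta,s)}\Ccurv(\theta_*,0)>0$.

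\textbf{Main obstacle.} The hardest step is the bookkeeping for $\Ccurv_{ss}$ and $\Ccurv_{\theta s}$. These need $r_{ss\theta\theta}(0,0)$, which requires differentiating the quotient in \eqref{eq:evolution-normal} twice in $\theta$. I would organize the computation by Taylor-expanding $H$, $Q$ and the right-hand side of \eqref{eq:evolution-normal} to order $\theta^2$ around $\theta=0$, rather than differentiating symbolically.
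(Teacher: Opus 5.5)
Your proposal is correct and follows the paper's proof: Cauchy--Kowalevski plus gluing and periodicity for (1), the explicit identity $H^2+2H_\theta^2-HH_{\theta\theta}=48\sin^2\theta(5+\cos^2\theta)$ for (2), and, for (3), reading off $r_{ss}$ and $r_{ss\theta\theta}$ at $s=0$ from the normal form \eqref{eq:evolution-normal}; carrying out your Taylor expansion gives $r_{ss}(0,0)=-512$ and $r_{ss\theta\theta}(0,0)=-688$, hence $\Ccurv_{ss}(0,0)=64+8r_{ss}-8r_{ss\theta\theta}=1472$, while $\Ccurv_{\theta s}(0,0)=64+96-160=0$ uses only Cauchy data (it does not need $r_{ss\theta\theta}$, contrary to what your last paragraph says). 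The one difference is that you transfer the values from $\theta=0$ to $\theta=\pi$ through the reflection symmetry $r(\pi-\theta,s)=r(\theta,s)$, instead of tracking the signs $Q_\theta=\pm4$ and $r_{ss\theta}=\pm504$ as the paper does; this is valid and slightly cleaner.
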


\begin{proof} [\bf Proof. ]
By \eqref{eq:r-signs-preliminary} and \eqref{eq:X-def}, we know that if $\theta\in [0,2\pi)$, we have a closed curve in dimension two. Now, we apply Cauchy-Kowalevski theorem to equation \eqref{eq:evolution} with the above Cauchy data, the existence and uniqueness of solution is proved. More precisely, for any $\theta_0\in \R$, there exist an open neighborhood $U_{\theta_0}$, a number $\delta_{\theta_0}>0$ and an unique real-analytic solution $u$ in $U_{\theta_0}\times (-\delta_{\theta_0},\delta_{\theta_0})$.  Since $[0,2\pi]$ is compact, finitely many of the sets
\(U_{\theta_0}\) cover $[0,2\pi]$.

The uniqueness of the Cauchy--Kowalevski theorem implies that
the local solutions agree on overlaps, and hence they glue to a
real-analytic solution on
\[
[0,2\pi]\times(-\delta,\delta).
\]
By \eqref{0902-1} and smoothness of function, after decreasing $\delta>0$, we have
\[
r>0,\quad r_{s}<0, \quad {in}\ \mathbb [0,2\pi]\times [-\delta,\delta].
\]
Since the initial data has period $2\pi$ and the equation \eqref{eq:evolution} is invariant under the translation $\theta\rightarrow \theta+2\pi$, we extend the solution $2\pi$-periodically to $\R$. Thus, $(1)$ holds.

In the following, we only consider the case $s=0$. By \eqref{eq:curvature} and \eqref{0902-1}, we have
\begin{equation}\nonumber
\Ccurv(\theta,0)=r^2+2r_\theta^2 -r r_{\theta \theta}=H^2+2H_\theta^2 -H H_{\theta \theta}=48 \sin^2 \theta (5+\cos^2\theta).
\end{equation}
Therefore, the curvature of the initial curve is convex and its curvature vanishes only at $\theta_{*}=0,\pi$.

By \eqref{eq:curvature}, we have
\begin{equation}\label{E:00088}
\left\{
\begin{aligned}
& \Ccurv_{\theta}=2rr_\theta +3r_\theta r_{\theta\theta}-rr_{\theta \theta \theta},\\
&\Ccurv_{s}=2rr_s+4r_\theta r_{\theta s}-r_s r_{\theta \theta}-rr_{\theta \theta s},
\end{aligned}
\right.
\end{equation}
and
\begin{equation}\label{2-Curvature}
\left\{
\begin{aligned}
& \Ccurv_{\theta\theta}=2r_\theta^2+2rr_{\theta \theta}+3r_{\theta \theta}^2+2r_\theta r_{\theta \theta \theta }-rr_{\theta \theta \theta \theta},\\
&\Ccurv_{\theta s}=2r_{s}r_\theta +2rr_{\theta s}+3r_{\theta s} r_{\theta \theta}+3r_\theta r_{\theta \theta s}-r_s r_{\theta \theta \theta}-rr_{\theta \theta \theta s},\\
&\Ccurv_{ss}=2 r_s^2+(2r-r_{\theta \theta})r_{ss}+4r_{\theta s}^2+4r_\theta r_{\theta ss}-2r_s r_{\theta \theta s}-rr_{\theta \theta ss}.
\end{aligned}
\right.
\end{equation}
By a direct calculation, at $\theta_{*}$ we have
\begin{equation}\label{0902-2}
\left\{
\begin{aligned}
& 
H=8,\quad H_\theta=0,\quad H_{\theta \theta}=H=8,\quad H_{\theta \theta \theta}=0,\quad H_{\theta \theta \theta \theta}=-32;\\
& Q=-8,\quad Q_{\theta }=\pm 4,\quad Q_{\theta \theta}=Q=-8, \quad Q_{\theta \theta\theta}=5Q_\theta=\pm 20.
\end{aligned}
\right.
\end{equation}
Therefore, at the point $(\theta_{*},0)$, we have
\[
\Ccurv_\theta=0=\Ccurv_{s}=\Ccurv_{\theta s},\quad \Ccurv_{\theta \theta}=576.
\]
In the following, we prove the following equality
\begin{equation}\label{3-Curvature}
H^2 \Ccurv_{ss}+ Q^2 \Ccurv_{\theta \theta}=2H^4 Q^2_\theta,
\end{equation}
at point $(\theta_{*},0)$, which implies $\Ccurv_{ss}(\theta_{*},0)=1472$.

By Proposition \ref{prop:evolution}, we have
\begin{equation}\nonumber
 r_{ss}
 =\frac{2r_\theta r_sr_{s\theta}
       -r_s^2r_{\theta\theta}
       -(r^3-r)r_s^2}
      {r^2+r_\theta^2}.
\end{equation}
When $s=0$, we have
\begin{equation}\label{0903-1}
r_{ss}=\frac{2H_\theta Q Q_{\theta}-Q^2 H_{\theta \theta}-(H^3-H)Q^2}{H^2+H_\theta^2}.
\end{equation}
At point $(\theta_{*},0)$, by \eqref{0902-2} and \eqref{0903-1}, we have
\begin{equation}\nonumber
\left\{
\begin{aligned}
& r_{ss\theta}=-\frac{(H^2-1)2QQ_{\theta}}{H}=\pm 504,\\
&r_{ss\theta \theta}=-HQ^2-2HQ_\theta^2+\frac{5Q^2}{H}+\frac{4Q^2_\theta}{H}-\frac{H_{\theta\theta\theta\theta}Q^2}{H^2}.
\end{aligned}.
\right.
\end{equation}
By \eqref{2-Curvature}, \eqref{0902-2} and the above equalities, we prove \eqref{3-Curvature}. The proof is completed.
\end{proof}

\begin{remark}
Proposition \ref{prop:evolution} and Lemma \ref{Application-CK} yield a real-analytic solution $r$ of equation~\eqref{eq:evolution}, whose level curves form a nested family and whose central member has zero curvature at certain points. This construction therefore produces a convex ring with strictly convex boundary whose Gaussian Capacitary potential possesses an interior level curve with isolated points of zero curvature. This indicates that the constant-rank theorem may fails in the Gaussian setting.
\end{remark}

\subsection{Construction of a curve with negative curvature point}

First, we notice that there exist many solutions to the equation
\[
-\Delta w +(x,\nabla w)=0,\quad \mathrm{in}\ \R^n.
\]
For example, we have
\[
w(x)=c_1 +c_2 \int_{0}^{x_2} e^{t^2/2}dt,\quad x=(x_1,x_2,\ldots,x_n)\in \R^{n}.
\]
In general, we know
\[
w(x)=c_1+c_2 \int_{0}^{(x,\xi)} e^{t^2/2} dt,\quad \forall\ \xi \in \mathbb{S}^{n-1}.
\]
In the following, we choose
\begin{equation}\label{Auxiliary-func}
w(x)=-\int_{0}^{x_2} e^{t^2/2}dt,\quad x=(x_1,x_2)\in \R^2.
\end{equation}

\begin{lemma}\label{3.2-1}
There exist $\delta_{*}\in (0,\delta)$, $\varepsilon_{*}>0$, and a jointly real-analytic function
\[
r^{\varepsilon}(\theta,s)=r(\theta,s,\varepsilon),\quad (\theta,s,\varepsilon)\in [0,2\pi)\times [-\delta_{*},\delta_{*}]\times (-\varepsilon_{*},\varepsilon_{*}).
\]
satisfying equation \eqref{eq:evolution}. Moreover, $r^{\varepsilon}$ can be extended to $\R$ with $2\pi$ period and
\[
r^{\varepsilon}>0,\quad r_s^{\varepsilon}<0,
\]
and $r^{\varepsilon}\rightarrow r^{0}$ in $C^{k}$ uniformly in $[0,2\pi)\times [-\delta_{*},\delta_{*}]$ for every fixed $k$. 
\end{lemma}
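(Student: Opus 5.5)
We construct $r^{\varepsilon}$ by solving \eqref{eq:evolution} again with the Cauchy--Kowalevski theorem, now with real-analytic Cauchy data that depend analytically on $\varepsilon$ and reduce to $(H,Q)$ at $\varepsilon=0$. The perturbation should come from the auxiliary solution $w$ in \eqref{Auxiliary-func}: we add $\varepsilon w$ to the potential near the level $s=0$. Concretely, the natural choice is to perturb the Cauchy data as
\[
r^{\varepsilon}(\theta,0)=H(\theta)+\varepsilon h(\theta),\qquad r^{\varepsilon}_s(\theta,0)=Q(\theta)+\varepsilon q(\theta),
\]
where $h,q$ are trigonometric polynomials, hence real analytic and $2\pi$-periodic. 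They could be read off from the level curves of $u+\varepsilon w$, or simply chosen so that later $\Ccurv^{\varepsilon}(\theta_*,0)<0$. For the present lemma only analyticity, periodicity and smallness matter.

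The key device is to treat $\varepsilon$ as an additional variable. We append the trivial equation $\varepsilon_s=0$, or equivalently regard $\varepsilon$ as a parameter in the analytic nonlinearity. Then \eqref{eq:evolution-normal} becomes a single analytic Cauchy problem in the variables $(\theta,\varepsilon,s)$, with $s$ as the time variable. Its right-hand side is real analytic wherever $r^2+r_\theta^2>0$, which holds near the data since $H\ge 8$.

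Proposition \ref{0911} then gives a unique real-analytic solution $r(\theta,s,\varepsilon)$ near each point $(\theta_0,0,0)$. By compactness of $[0,2\pi]$ and uniqueness, these local solutions glue together exactly as in the proof of Lemma \ref{Application-CK}. This yields a jointly analytic solution on $[0,2\pi]\times[-\delta_*,\delta_*]\times(-\varepsilon_*,\varepsilon_*)$. At $\varepsilon=0$ it coincides with $r$, again by uniqueness, and periodicity follows from the $2\pi$-translation invariance of both the equation and the data.

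The signs $r^{\varepsilon}>0$ and $r^{\varepsilon}_s<0$ follow by continuity on a compact set. At $(s,\varepsilon)=(0,0)$ we have $r=H\ge8$ and $r_s=Q\le-4$, so both inequalities persist after shrinking $\delta_*$ and $\varepsilon_*$.

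$C^k$ convergence is immediate from joint analyticity, since all derivatives of $r(\theta,s,\varepsilon)$ are continuous on a compact set and hence uniformly continuous in $\varepsilon$. Alternatively one can use Proposition \ref{0909}: rewrite the equation as a first-order quasilinear system for $(r,r_\theta,r_s)$, obtain $C^0$ convergence, and upgrade it to $C^k$ via Cauchy estimates on a slightly smaller complex domain.

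The main obstacle is that the domain of existence must be uniform in $\varepsilon$, so that $\delta_*$ does not shrink to zero as $\varepsilon\to0$. Making $\varepsilon$ a variable and applying Cauchy--Kowalevski at $\varepsilon=0$ handles this directly, because a single neighborhood of $\varepsilon=0$ is produced. The only care needed is the finite covering in $\theta$.
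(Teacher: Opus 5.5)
Your proof is correct for the lemma as stated, but it takes a different route from the paper.

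\textbf{The paper's route.} The paper does not invoke Cauchy--Kowalevski a second time. It uses linearity of the Ornstein--Uhlenbeck operator to form the explicit solution $u^{\varepsilon}=u^{0}+\varepsilon w$. It then obtains $r^{\varepsilon}$ from the analytic implicit function theorem applied to $G(\theta,\rho,s,\varepsilon)=u^{\varepsilon}(\rho e(\theta))-s$, whose nondegeneracy condition is $G_\rho=1/r^0_s<0$. Equation \eqref{eq:evolution} then follows for free from Proposition~\ref{prop:evolution}, and the signs and $C^k$ convergence are obtained exactly as in your last steps.

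\textbf{What the paper's route buys.} It yields the identity $(u^{0}+\varepsilon w)(r^{\varepsilon}(\theta,s)e(\theta))=s$. This identity is not part of the statement; as literally stated, even $r^{\varepsilon}\equiv r^{0}$ would satisfy the lemma. But it is exactly what Lemma~\ref{3.2-2} differentiates to get $\partial_\varepsilon r^{\varepsilon}(\theta,0)|_{\varepsilon=0}=-QW$. It is also what gives the boundary values of $\bar u$ in the proof of Theorem~\ref{T1}.

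Your family with trigonometric-polynomial $h,q$ does not satisfy this identity. The radial function of the zero level curve of $u^{0}+\varepsilon w$ is not of the form $H+\varepsilon h$ with such $h$, since already its $\varepsilon$-derivative $-QW$ is not a trigonometric polynomial. Moreover, ``reading off'' the data from those level curves would require the implicit function theorem anyway, which would make your Cauchy--Kowalevski step redundant.

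\textbf{What your route buys.} It does not use linearity at all; it is the data-perturbation mechanism the paper only adopts for $p\neq 2$. Treating $\varepsilon$ as an extra space variable gives joint analyticity and an $\varepsilon$-uniform existence domain directly, without appealing to Walter's continuous-dependence theorem.

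The only cost is that Lemma~\ref{3.2-2} must then be proved directly, which is easy. The quantity $\Ccurv^{\varepsilon}(\theta,0)$ depends only on $H+\varepsilon h$, so $\partial_\varepsilon\Ccurv^{\varepsilon}(\theta_*,0)|_{\varepsilon=0}=8(h-h_{\theta\theta})(\theta_*)$. For example, $h\equiv-1$ gives exactly $\Ccurv^{\varepsilon}(\theta_*,0)=-\varepsilon(8-\varepsilon)$. The potential is then defined from $r^{\varepsilon}$ through \eqref{eq:v-def} and Proposition~\ref{prop:evolution}.
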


\begin{proof}[\bf Proof.]
By Lemma \ref{Application-CK}, there exist $\delta>0$, $r^0$ and $u^{0}$ such that
\begin{equation}\label{0829-1}
u^{0}(r^0(\theta,s)e(\theta)):=s,\quad \forall\ (\theta,s)\in \R\times [-\delta,+\delta],
\end{equation}
where $r^0$ satisfies \eqref{eq:evolution}. By Proposition \ref{prop:evolution}, we know that $u^0$ satisfies \eqref{eq:OU-v}.

Consider function
\[
u^{\varepsilon}=u^0+\varepsilon w.
\]
Obviously,
\[
L_\gamma (u+\varepsilon w)=0.
\]
Define
\[
G(\theta,\rho,s,\varepsilon)=u^{\varepsilon}(\rho e(\theta))-s,\quad (\theta,\rho,s,\varepsilon)\in \R\times \mathbb{R}\times \mathbb{R}\times \mathbb{R}.
\]
Therefore, when $\varepsilon=0$, we have
\begin{equation}\label{0829-2}
G(\theta,r^{0}(\theta,s),s,0)=0.
\end{equation}
Differentiating \eqref{0829-1} with respect to $s$, we have
\[
u_{\rho}^0(r^{0}(\theta,s)e(\theta)) r^{0}_s(\theta,s)=1.
\]
By Lemma \ref{Application-CK}, we have
\[
r^{0}>0,\quad r^0_s<0,\quad \forall (\theta,s)\in \R\times [-\delta,\delta]
\]
Therefore, 
\begin{equation}\label{0829-3}
G_\rho (\theta,r^0(\theta,s)s,0)=u_{\rho}(r^0(\theta,s)e(\theta))=\frac{1}{r^0_s(\theta,s)}<0.
\end{equation}
Therefore, for any $\theta\in \R$ and $s=0$, by \eqref{0829-2}, \eqref{0829-3} and the implicit theorem, we conclude that there exists $\varepsilon_{*}>0$, $0<\delta_{*}<\delta$ and a jointly real-analytic function $r^{\varepsilon}$ 
\[
r^{\varepsilon}(\theta,s)=r(\theta,s,\varepsilon),\quad (\theta,s,\varepsilon)\in (\theta-\delta_{*},\theta+\delta_{*})\times (-\delta_{*},\delta_{*})\times (-\varepsilon_{*},\varepsilon_{*}).
\]
such that
\[
G(\theta,r^{\varepsilon},s,\varepsilon)=0.
\]
And $r^{\varepsilon}$ is unique. By the compactness of set $[0,2\pi]$ and uniqueness of solution $r^{\varepsilon}$, we conclude that after decreasing $\delta_{*}$ and $\varepsilon_{*}$,
\[
u^{\varepsilon}(r^{\varepsilon}(\theta,s)e(\theta))=s,\quad (\theta,s,\varepsilon)\in [0,2\pi)\times (-\delta_{*},\delta_{*})\times (-\varepsilon_{*},\varepsilon_{*}).
\]
Moreover, $r^{\varepsilon}$ can be extended to $\R$ with $2\pi$ period and
\[
r^{\varepsilon}>0,\quad r_s^{\varepsilon}<0.
\]
After decreasing $\delta_{*}$, by the jointly real-analyticity of $r^{\varepsilon}$. we conclude that $r^{\varepsilon}\rightarrow r^{0}$ in $C^{k}$ uniformly in $[0,2\pi)\times [-\delta_{*},\delta_{*}]$ for every fixed $k$. 
\end{proof}

\begin{lemma}\label{3.2-2}
Let $r^\varepsilon$ is given by Lemma \ref{3.2-1}. For any $(\theta,s,\varepsilon)\in [0,2\pi)\times [-\delta_{*}.\delta_{*}]\times (-\varepsilon_{*},\varepsilon_{*})$, define
\[
\Ccurv^{\varepsilon}(\theta,s):=(r^{\varepsilon})^2+2(r^{\varepsilon}_\theta)^2-r^{\varepsilon} r^{\varepsilon}_{\theta\theta}.
\]
Then for every sufficiently small $\varepsilon>0$, we have
\[
\Ccurv^{\varepsilon}(0,0)<0,\quad \Ccurv^{\varepsilon}(\pi,0)<0.
\]
\end{lemma}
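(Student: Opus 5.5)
The plan is a first-order perturbation argument in $\varepsilon$ at the two degenerate points $\theta_*\in\{0,\pi\}$. By Lemma~\ref{3.2-1}, $r(\theta,s,\varepsilon)$ is jointly real-analytic, so $\Ccurv^{\varepsilon}(\theta,0)$ is real-analytic in $(\theta,\varepsilon)$. Expanding in $\varepsilon$,
\[
\Ccurv^{\varepsilon}(\theta_*,0)=\Ccurv^{0}(\theta_*,0)+\varepsilon\,\partial_\varepsilon\Ccurv^{\varepsilon}(\theta_*,0)\big|_{\varepsilon=0}+O(\varepsilon^2).
\]
By Lemma~\ref{Application-CK}(2), $\Ccurv^0(\theta_*,0)=0$. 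So it suffices to show that $\partial_\varepsilon\Ccurv^{\varepsilon}(\theta_*,0)|_{\varepsilon=0}<0$ at both $\theta_*=0$ and $\theta_*=\pi$, and then to take $\varepsilon>0$ small.

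First I compute the variation $\varphi(\theta):=\partial_\varepsilon r^\varepsilon(\theta,0)|_{\varepsilon=0}$. Differentiate the identity $u^0(r^\varepsilon e(\theta))+\varepsilon w(r^\varepsilon e(\theta))=s$ in $\varepsilon$ at $\varepsilon=0$. This gives $u^0_\rho\,\varphi+w(r^0e)=0$. Using $u^0_\rho=1/r^0_s$ from Proposition~\ref{lem:inverse-derivatives}, at $s=0$ this becomes
\[
\varphi(\theta)=-Q(\theta)\,w\bigl(H(\theta)e(\theta)\bigr)=Q(\theta)\,W\bigl(H(\theta)\sin\theta\bigr),\qquad W(y):=\int_0^y e^{t^2/2}\,dt.
\]
Joint analyticity lets $\partial_\varepsilon$ commute with $\partial_\theta$. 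Hence
\[
\partial_\varepsilon\Ccurv=2r\varphi+4r_\theta\varphi_\theta-\varphi\, r_{\theta\theta}-r\varphi_{\theta\theta}.
\]
At $\theta_*$, \eqref{0902-2} gives $r=H=8$, $r_\theta=0$ and $r_{\theta\theta}=8$. Therefore $\partial_\varepsilon\Ccurv(\theta_*,0)=8\varphi(\theta_*)-8\varphi_{\theta\theta}(\theta_*)$.

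Next I evaluate $\varphi$ and $\varphi_{\theta\theta}$ at $\theta_*$. Set $g(\theta)=H(\theta)\sin\theta$. Then $g(\theta_*)=0$, so $W(g)=0$ and $\varphi(\theta_*)=0$. Since $W(0)=0$, $W'(0)=1$ and $W''(0)=0$, the chain rule gives
\[
\varphi_{\theta\theta}=2Q_\theta\, g_\theta+Q\, g_{\theta\theta}\quad\text{at }\theta_* .
\]
We have $g_\theta=H_\theta\sin\theta+H\cos\theta=\pm8$ and $g_{\theta\theta}=H_{\theta\theta}\sin\theta+2H_\theta\cos\theta-H\sin\theta=0$. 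Also $Q_\theta(0)=4$ and $Q_\theta(\pi)=-4$. In both cases $Q_\theta g_\theta=32$, so $\varphi_{\theta\theta}(\theta_*)=64$ and $\partial_\varepsilon\Ccurv(\theta_*,0)=-512<0$.

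The main point needing care is justifying the expansion: the $O(\varepsilon^2)$ remainder must be uniform and the derivatives must be interchangeable. I would handle this through the joint real-analyticity of $r(\theta,s,\varepsilon)$ from Lemma~\ref{3.2-1}, equivalently the $C^k$ convergence with smooth dependence on $\varepsilon$. With that, $\Ccurv^\varepsilon(\theta_*,0)=-512\,\varepsilon+O(\varepsilon^2)<0$ for all sufficiently small $\varepsilon>0$.
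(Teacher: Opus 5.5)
Your proposal is correct and follows the paper's proof essentially step for step. You linearize $\Ccurv^{\varepsilon}(\theta_*,0)$ at $\varepsilon=0$ and obtain the variation $\varphi=-Q\,w(He)$ by differentiating $(u^0+\varepsilon w)(r^\varepsilon e)=s$; you then reduce to $8(\varphi-\varphi_{\theta\theta})$ at $\theta_*$ and get $-512$. Your chain-rule evaluation through $g=H\sin\theta$ (with $Q_\theta g_\theta=32$ at both points) is only a repackaging of the paper's values $W_\theta(0)=-8$, $W_\theta(\pi)=8$ and $W_{\theta\theta}(\theta_*)=0$.
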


\begin{proof}[\bf Proof.]
Let
\[
f(\theta)=\partial_{\varepsilon} r^{\varepsilon}(\theta,0)|_{\varepsilon=0}.
\]
By the definition of $\Ccurv^{\varepsilon}$, we have
\begin{equation}\label{0829-4}
\dot{\Ccurv}(\theta,0)=\partial_{\varepsilon} \Ccurv^{\varepsilon} (\theta,0)|_{\varepsilon=0}=2r^{0}(\theta,0)f(\theta)+4r_\theta^{0}(\theta,0) f_{\theta}(\theta)-r^{0}(\theta,0)f_{\theta\theta}(\theta\theta)-f(\theta)r_{\theta\theta}^{0}.
\end{equation}
By Lemma \ref{Application-CK}, we know that
\[
r^{0}(\theta,0)=H(\theta)=10-2\cos 2\theta,\quad r^{0}_s=Q(\theta)=-H(\theta)+7\sin \theta -\sin 3\theta.
\]
Therefore, by \eqref{0829-4}, we have
\begin{equation}
\dot{\Ccurv}(\theta,0)=2Hf+4H_\theta f_{\theta}-Hf_{\theta\theta}-fH_{\theta\theta}.
\end{equation}
At $\theta_{*}\in \{0,\pi\}$, we have
\begin{equation}\label{0829-5}
\dot{\Ccurv}(\theta_{*},0)=8(f-f_{\theta\theta}).
\end{equation}
By Lemma \ref{3.2-1}, we have
\[
(u+\varepsilon w)(r^{\varepsilon}(\theta,s)e(\theta))=s.
\]
Differentiating above equality with respect to $\varepsilon$ and let $s=0$, $\varepsilon=0$, we have
\[
u^{0}_{\rho}f(\theta)+w(r^{0}(\theta,0)e(\theta))=u^{0}_{\rho}f(\theta)+w(H(\theta)e(\theta))=Q^{-1}(\theta)f(\theta)+w(H(\theta)e(\theta))=0.
\]
By \eqref{Auxiliary-func}, we have
\[
w(H(\theta)e(\theta))=-\int_{0}^{H(\theta)\sin \theta} e^{t^2/2} dt.
\]
For simplicity, let $W(\theta):=w(H(\theta)e(\theta))$.
Then
\[
W_\theta(0)=-8,\quad W_{\theta}(\pi)=8,\quad W_{\theta\theta}(0)=W_{\theta \theta}(\pi)=0.
\]
By \eqref{0829-5}, we have 
\begin{equation}
\dot{\Ccurv}(\theta_{*},0)=-512.
\end{equation}
By Lemma \ref{Application-CK}, if $\theta_{*}\in \{0,\pi\}$ and $s=0$, we know that
\begin{equation}
\Ccurv^{0}(\theta_{*},0)=0.
\end{equation}
Since $C^{\varepsilon}$ is analytic w.r.t $\varepsilon$, the Taylor expansion at the point $(\theta_{*},0,0)$ yields
\[
C^{\varepsilon}(\theta_{*},0)=-512\varepsilon+O(\varepsilon^2).
\]
We complete the proof.
\end{proof}

\subsection{Proof of Theorem \ref{T1}}
In the following, we give the proof of Theorem \ref{T1} under the assumptions $n=2$ and $p=2$.

\begin{proof}[\bf Proof of Theorem \ref{T1}.]
By Lemma \ref{Application-CK}, we have
\begin{equation}
m:=\mathop{\min}_{\theta\in [0,2\pi)} \min \{ \Ccurv^{0}(\theta,-\delta), \Ccurv^{0}(\theta,\delta) \}>0.
\end{equation}
By Lemma \ref{3.2-2}, there exist $\varepsilon_1$ such that for any $\varepsilon\in (0,\varepsilon_1) $
\begin{equation}\label{0829-7}
\Ccurv^{\varepsilon}(0,0)<0,\quad \Ccurv^{\varepsilon}(\pi,0)<0.
\end{equation}
By Lemma \ref{3.2-1}, we have
\[
\Ccurv^{\varepsilon}(\cdot,\pm \delta)\rightarrow \Ccurv^{0}(\cdot,\pm \delta)\quad \mathrm{uniformly\ in}\ [0,2\pi).
\]
Therefore, there exists $\varepsilon_2>0$ such that for any $\varepsilon\in (0,\varepsilon_2)$, we have
\begin{equation}\label{0829-6}
\Ccurv^{\varepsilon}(\theta,\pm \delta)>\frac{m}{2}>0,\quad  \forall\ \theta\in [0,2\pi).
\end{equation}
Choose
\[
0<\bar{\varepsilon}<\min\{\varepsilon_1,\varepsilon_2\}.
\]
Write
\[
\bar{u}=u^{\bar{\varepsilon}},\quad \bar{r}=r^{\bar{\varepsilon}},\quad \bar{\Ccurv}=\Ccurv^{\bar{\varepsilon}}.
\]
Let
\[
K_s=\{ \rho e(\theta):\ 0\leq \rho \leq \bar{r}(\theta,s),\ \theta\in [0,2\pi) \},\quad s\in [-\delta,\delta].
\]
Since $\bar{r}_s<0$, we have
\[
0\in K_{\delta} \Subset  K_{-\delta}.
\]
By \eqref{0829-6} and Lemma \ref{Application-CK}, $K_{\pm \delta}$ have real-analytic strictly convex boundaries. Moreover, \eqref{0829-7} implies that $K_0$ is not convex.

Define
\[
\Omega_0=\mathrm{int}\ K_{-\delta},\quad \Omega_1=\mathrm{int}\ K_{\delta},\quad \Omega=\Omega_0\setminus \overline{\Omega}_1
\]
Lemma \ref{3.2-1} gives
\[
\bar{u}=-\delta\quad \mathrm{on}\ \partial \Omega_0,\quad \bar{u}=\delta\quad \mathrm{on}\ \partial \Omega_1.
\]
Therefore, 
\[
\upsilon=\frac{\bar{u}+\delta}{2\delta}
\]
satisfies \eqref{eq:OU-problem-intro}. The uniqueness of the Dirichlet solution follows from the weak maximum
principle for linear uniformly elliptic equations with bounded first-order coefficients.

Finally, the fact $\upsilon=0.5$ implies that $\bar{u}=0$. Therefore, $\Omega_{0.5}$ is not convex. This proves the theorem.
\end{proof}

\section{Counterexamples in higher dimensions and nonlinear extension}

\subsection{Counterexample in higher dimensions for \texorpdfstring{$p=2$}{p=2}}
In this section, we construct a counterexample in higher dimensions and $p=2$.
In $\mathbb{R}^n$, the position vector can be parameterized in term of the radial function as follows
\begin{equation}\label{eq:X-def2}
X(\omega,s)=r(\omega,s)\omega,\quad \omega\in \mathbb{S}^{n-1},
\end{equation}
where $r(\omega,s)\in C^2(\mathbb{S}^{n-1}\times (-\delta,\delta))$ $(\delta>0)$ satisfies
\begin{equation}\label{eq:r-signs-preliminary2}
 r>0,
 \qquad
 r_s <0.
\end{equation}
Locally, \eqref{eq:r-signs-preliminary2} allows us to define a function $u$ by
\begin{equation}\label{eq:v-def2}
u \bigl(r(\omega,s)\omega\bigr)=s.
\end{equation}
Writing a point as $x=\rho \omega$, the defining relation is
\begin{equation}\label{0903-2}
\rho=r\bigl(\omega,u(\rho,\omega)\bigr).
\end{equation}

Let $g_{ij}$ be the standard metric on $\mathbb{S}^{n-1}$ and let $\nabla$, $\nabla^2$ and $\Delta_{\mathbb{S}^{n-1}}$ denote the gradient, Hessian and Laplacian on $\mathbb{S}^{n-1}$. On the hypersurface $X(\omega)=r(\omega,s)\omega$, $\omega\in \mathbb{S}^{n-1}$, the outward unit normal and the second fundamental form are
\[
\nu=\frac{r\omega-\nabla r}{\sqrt{r^2+|\nabla r|^2}},\quad \Pi=\frac{\Ccurv(r)}{\sqrt{r^2+|\nabla r|^2}}.
\]
where 
\begin{equation}\label{0903-11}
\Ccurv(r):=r^2 g_{ij}+2r_{i}r_{j}-r\nabla^2 r.
\end{equation}
Thus the fact that matrix $\Ccurv(r)$ is non-negative definite means that the hypersurface is convex. While
$C(r)<0$ at one point excludes convexity.

At the point under consideration, we choose geodesic normal coordinates on $\mathbb S^{n-1}$, so that $g_{ij}=\delta_{ij}$ and $\Gamma_{ij}^{k}=0$ at that point. Hence, at this point, the covariant Hessian agrees with the ordinary second derivatives.

\begin{lemma}\label{prop:evolution2}
Assume that \eqref{eq:r-signs-preliminary2} holds. The function $u$ defined by \eqref{eq:v-def2} satisfies
\begin{equation}\nonumber
 -\Delta u+x\cdot\nabla u=0,
\end{equation}
if and only if $r$ satisfies
\begin{equation}\nonumber
 (r^2+|\nabla r|^2)r_{ss}
 -2 r_s(\nabla r, \nabla r_{s})
 +r_s^2\Delta_{\mathbb{S}^{n-1}} r
 +(r^3-(n-1)r)r_s^2=0.
\end{equation}
Equivalently, 
\begin{equation}\label{eq:evolution2}
 r_{ss}
 =\frac{2 r_s(\nabla r,\nabla r_{s})
       -r_s^2 \Delta_{\mathbb{S}^{n-1}} r
       -(r^3-(n-1)r)r_s^2}
      {r^2+|\nabla r|^2}.
\end{equation}
\end{lemma}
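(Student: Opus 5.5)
I will mirror the proof of Proposition \ref{prop:evolution}: compute the derivatives of $u$ in polar coordinates $x=\rho\omega$ from the implicit relation \eqref{0903-2}, substitute into the polar form of the Ornstein--Uhlenbeck operator, and clear denominators. In $\R^n$,
\[
\Delta u=u_{\rho\rho}+\frac{n-1}{\rho}u_\rho+\frac{1}{\rho^2}\Delta_{\mathbb{S}^{n-1}}u,\qquad x\cdot\nabla u=\rho u_\rho,
\]
where the spherical Laplacian is taken with $\rho$ fixed. Well-definedness of $u$ follows from \eqref{eq:r-signs-preliminary2} exactly as in the planar case, since $s\mapsto r(\omega,s)$ is strictly decreasing.

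\textbf{Step 1 (radial derivatives).} Differentiating \eqref{0903-2} in $\rho$ with $\omega$ fixed gives
\[
u_\rho=\frac{1}{r_s},\qquad u_{\rho\rho}=-\frac{r_{ss}}{r_s^3},
\]
as in Proposition \ref{lem:inverse-derivatives}.

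\textbf{Step 2 (angular derivatives).} I work at a fixed point in geodesic normal coordinates on $\mathbb{S}^{n-1}$, so that $g_{ij}=\delta_{ij}$ and the Christoffel symbols vanish. Differentiating \eqref{0903-2} in $\omega_i$ with $\rho$ fixed gives $0=r_i+r_su_i$, hence $u_i=-r_i/r_s$. Differentiating once more in $\omega_i$ and summing over $i$, exactly as in the planar computation but with $r_\theta$ replaced by $r_i$, gives
\[
0=r_{ii}+2r_{si}u_i+r_{ss}u_i^2+r_su_{ii}.
\]
Therefore
\[
\Delta_{\mathbb{S}^{n-1}}u=-\frac{\Delta_{\mathbb{S}^{n-1}}r}{r_s}+\frac{2(\nabla r,\nabla r_s)}{r_s^2}-\frac{|\nabla r|^2r_{ss}}{r_s^3}.
\]
This expression is tensorial, since it involves only the Laplacian, an inner product and a squared norm. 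It therefore holds in any coordinates, and the choice of normal coordinates is harmless. This is the step requiring the most care, namely checking that the covariant second derivatives agree with the coordinate ones at the chosen point.

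\textbf{Step 3 (assembling the equation).} Substitute into $-\Delta u+\rho u_\rho=0$ at $\rho=r$:
\[
0=\frac{r_{ss}}{r_s^3}-\frac{n-1}{r r_s}-\frac{1}{r^2}\Bigl(-\frac{\Delta_{\mathbb{S}^{n-1}}r}{r_s}+\frac{2(\nabla r,\nabla r_s)}{r_s^2}-\frac{|\nabla r|^2r_{ss}}{r_s^3}\Bigr)+\frac{r}{r_s}.
\]
Multiplying by $r^2r_s^3$ yields
\[
(r^2+|\nabla r|^2)r_{ss}-2r_s(\nabla r,\nabla r_s)+r_s^2\Delta_{\mathbb{S}^{n-1}}r+(r^3-(n-1)r)r_s^2=0.
\]
Because $r>0$ and $r_s<0$, this multiplication is reversible, which gives the \emph{if and only if}. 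Since $r^2+|\nabla r|^2>0$, the equation is equivalent to \eqref{eq:evolution2}.

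The main obstacle is purely bookkeeping: the sign conventions and the factor $n-1$, which must reduce to Proposition \ref{prop:evolution} when $n=2$. As a check, setting $n=2$ recovers \eqref{eq:evolution}.
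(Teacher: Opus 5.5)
Your proposal is correct and follows essentially the same route as the paper's proof. Both arguments use the polar form of $-\Delta u+x\cdot\nabla u$ and the radial derivatives $u_\rho=1/r_s$, $u_{\rho\rho}=-r_{ss}/r_s^3$. Both then use the angular formulas $\nabla_{\mathbb{S}^{n-1}}u=-\nabla r/r_s$ and the resulting expression for $\Delta_{\mathbb{S}^{n-1}}u$, and finish by clearing $r^2r_s^3$. Your normal-coordinate justification of the spherical Laplacian and your remark that the multiplication is reversible only make explicit what the paper leaves implicit.
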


\begin{proof}[\bf Proof.]
Using spherical coordinates, we have
\begin{equation}\nonumber
\Delta u= u_{rr}+\frac{n-1}{r}u_r+\frac{1}{r^2}\Delta_{\mathbb{S}^{n-1}} u,\quad (x,\nabla u)=ru_r
\end{equation}
Thus, we deduce that
\begin{equation}\label{0903-3}
u_{rr}+\frac{n-1}{r}u_r+\frac{1}{r^2}\Delta_{\mathbb{S}^{n-1}} u-ru_r=0.
\end{equation}
By \eqref{eq:v-def2}, we have
\begin{equation}\label{0903-4}
u_r=\frac{1}{r_s},\quad u_{rr}=-\frac{r_{ss}}{r_s^3}.
\end{equation}
By \eqref{0903-2}, we have
\begin{equation}\label{0903-5}
\nabla_{\mathbb{S}^{n-1}} u=-\frac{1}{r_s}\nabla_{\mathbb{S}^{n-1}} r,\quad 
\Delta_{\mathbb{S}^{n-1}} u=-\frac{1}{r_s}\Delta_{\mathbb{S}^{n-1}} r+2\frac{1}{r_s^2}(\nabla r,\nabla r_s)-\frac{r_{ss}}{r_s^3}|\nabla_{\mathbb{S}^{n-1}}r|^2.
\end{equation}
Here $\nabla r$ denotes the derivative of $r(\omega,s)$ with respect to its first variable with $s$ fixed. By \eqref{0903-3}, \eqref{0903-4} and  \eqref{0903-5}, we complete the proof.
\end{proof}

To apply Cauchy-Kowalevski Theorem, we choose Cauchy data on $\omega=(\omega_1,\ldots,\omega_n)\in \mathbb{S}^{n-1}$ as follows 
\begin{equation}\label{0830-1}
H(\omega)=H(\omega_2)=A\left( 1+\frac{\omega_2^2}{2} \right),\quad Q(\omega)=Q(\omega_2)=-H(\omega_2)+4\omega_2(1+\omega_2^2),
\end{equation}
where 
\begin{equation}\label{0904-1}
A>\max\left\{ \frac{16}{3},\sqrt{\frac{32n}{23}}\right\}
\end{equation}
is a constant. For simplicity, we define $z(\omega):=\omega_2$. Obviously, $H(z)\geq A$ and $Q(z)<0$, for all $z\in [-1,1]$.

\begin{lemma}
There exists a unique solution $r(\omega,s)$ which satisfies equation \eqref{eq:evolution} 
with Cauchy data
\begin{equation}\label{0903-15}
 r(\omega,0)=H(\omega_2),
 \qquad
 r_s(\omega,0)=Q(\omega_2),\quad \forall\ \omega=(\omega_1,\cdots,\omega_n)\in \mathbb{S}^{n-1}.
\end{equation}
\begin{enumerate}
\item After decreasing $s_0$ if necessary,
\[
 r>0,
 \qquad
 r_s<0
 \quad\text{in } \mathbb{S}^{n-1}\times [-s_0,s_0].
\]
Moreover, the solution is symmetric with respect to the axis $e_2$, that is,
\begin{equation}\label{0831-1}
r(\omega,s)=r(\omega_2,s),\quad s\in [-s_0,s_0].
\end{equation}

\item If $s=0$, $\Ccurv(\omega,0)$ is a positive semi-definite matrix on $\mathbb{S}^{n-1}$.  The eigenvalues of $\Ccurv(\omega,0)$ on $\mathbb{S}^{n-1}$ are 
\[
\lambda (\omega,0)=A^2\left( 1+\frac{z^2}{2}\right) \left( 1+\frac{3z^2}{2}\right)>0
\]
with multiplicity $n-2$, and 
\[
\Lambda(\omega,0)=\frac{3}{4}A^2\left(6-z^2 \right)z^2\geq 0.
\]
with multiplicity $1$.

\item If $s=0$, $\omega_{*}=(\omega_1,\cdots,\omega_n)\in \mathbb{S}^{n-1}$ with $\omega_{2}=0$, we have
\[
\Lambda(\omega_{*},0)=0,\quad  \Lambda_z(\omega_{*},0)= \Lambda_s(\omega_{*},0)=\Lambda_{zs}(\omega_{*},0)=0
\]
and
\[
\Lambda_{zz}(\omega^{*},0)=9A^2,\quad\Lambda_{ss}(\omega_{*},0)=23A^2-32n+64>0. 
\]
\end{enumerate}
\end{lemma}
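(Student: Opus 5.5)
The plan is to run the same three steps as in Lemma \ref{Application-CK}: construct the solution by Cauchy--Kowalevski, then reduce the curvature matrix to a function of the single variable $z=\omega_2$, then Taylor-expand its degenerate eigenvalue at the equator $\{z=0\}$.

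\emph{Existence, signs and symmetry.} In local real-analytic charts of $\mathbb{S}^{n-1}$, equation \eqref{eq:evolution2} is in Cauchy--Kowalevski normal form $r_{ss}=F(\omega,r,\nabla r,\nabla r_s,\nabla^2 r, r_s)$. Here $F$ is analytic because $r^2+|\nabla r|^2\ge A^2>0$ near the data, and the data \eqref{0903-15} are polynomials in $\omega$. Proposition \ref{0911} then gives local analytic solutions near each point of $\mathbb{S}^{n-1}\times\{0\}$. I will cover the compact sphere by finitely many charts and glue the local solutions by uniqueness, exactly as in Lemma \ref{Application-CK}. This gives a solution on $\mathbb{S}^{n-1}\times[-s_0,s_0]$. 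Since $H\ge A>0$ and $Q<0$ on $[-1,1]$, shrinking $s_0$ gives $r>0$ and $r_s<0$.

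For \eqref{0831-1}, take any orthogonal map $R$ fixing $e_2$. The equation is invariant under $R$, because $\nabla$ and $\Delta_{\mathbb{S}^{n-1}}$ commute with isometries, and the data depend only on $\omega_2$. So $r(R\omega,s)$ solves the same Cauchy problem and coincides with $r(\omega,s)$ by uniqueness. Since these $R$ act transitively on each set $\{\omega_2=c\}$, $r$ is a function of $(\omega_2,s)$ alone.

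\emph{Eigenvalues at $s=0$.} For a function $r=\phi(z)$ with $z=\omega_2$, I will use the spherical identities
\[
\nabla z=e_2-z\omega,\qquad |\nabla z|^2=1-z^2,\qquad \nabla^2 z=-z\,g .
\]
Hence $\nabla^2 r=\phi''\,\nabla z\otimes\nabla z-z\phi' g$. Substituting into \eqref{0903-11} gives
\[
\Ccurv=(\phi^2+z\phi\phi')\,g+(2\phi'^2-\phi\phi'')\,\nabla z\otimes\nabla z .
\]
So $\lambda=\phi^2+z\phi\phi'$ on $(\nabla z)^\perp$, with multiplicity $n-2$, and
\[
\Lambda=\lambda+(1-z^2)(2\phi'^2-\phi\phi'')
\]
in the direction $\nabla z$. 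With $\phi=H$, $H'=Az$ and $H''=A$, direct expansion gives $\lambda=A^2(1+\tfrac{z^2}{2})(1+\tfrac{3z^2}{2})$ and $\Lambda=\tfrac34A^2z^2(6-z^2)\ge0$ on $[-1,1]$. This also gives $\Lambda=\Lambda_z=0$ and $\Lambda_{zz}=9A^2$ at $z=0$.

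\emph{Derivatives in $s$ at the equator.} The same formula holds for each $s$, with
\[
\Lambda(z,s)=r^2+zrr_z+(1-z^2)(2r_z^2-rr_{zz}).
\]
I will differentiate it once and twice in $s$ and evaluate at $z=0$, $s=0$. The inputs at $z=0$ are
\[
H=A,\quad H_z=0,\quad H_{zz}=A,\qquad Q=-A,\quad Q_z=4,\quad Q_{zz}=-A,\quad Q_{zzz}=24 .
\]
The terms $r_{ss}$, $r_{ssz}$ and $r_{sszz}$ are obtained from \eqref{eq:evolution2}, written in $z$ using
\[
\Delta_{\mathbb{S}^{n-1}} r=(1-z^2)r_{zz}-(n-1)zr_z,\qquad (\nabla r,\nabla r_s)=(1-z^2)r_zr_{sz} .
\]
This makes $r_{ss}(z,0)$ an explicit rational function of $H,Q$ and their $z$-derivatives, which I expand to second order in $z$. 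Then I check the following:
\begin{itemize}
\item $\Lambda_s(0,0)=2AQ-AQ_{zz}=0$, using $H_z=0$.
\item $\Lambda_{zs}(0,0)=0$. Here the odd parity in $z$ of the relevant terms, together with $Q_z=4$ and $Q=-A$, must produce a cancellation.
\item $\Lambda_{ss}(0,0)$ equals $23A^2-32n+64$. This is positive by \eqref{0904-1}, since $A^2>32n/23$.
\end{itemize}

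\textbf{Main obstacle.} I expect the $\Lambda_{ss}$ computation to be the hard step, because it requires $r_{sszz}$ at $z=0$. That means differentiating the quotient in \eqref{eq:evolution2} twice in $z$. The dimension-dependent term $(n-1)zr_z$ in the spherical Laplacian is what produces the $-32n$. I will organize the computation as in \eqref{3-Curvature}, first recording all $z$-derivatives of $H$ and $Q$ up to order four at $z=0$.
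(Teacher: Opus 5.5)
Your plan is the same as the paper's proof: Cauchy--Kowalevski with gluing, $e_2$-symmetry from isometry invariance plus uniqueness, the splitting of $\Ccurv$ along $\nabla z$, and a second-order Taylor expansion at $z=0$ of the $z$-form of \eqref{eq:evolution2}, which gives $r_{ss}=-A^3+(n-2)A$ and $r_{sszz}=-A^3+(n-25)A+32n/A$, hence $\Lambda_{ss}(\omega_*,0)=Ar_{ss}-Ar_{sszz}+64=23A^2-32n+64$, while $\Lambda_{zs}(\omega_*,0)=3HQ_z+3H_{zz}Q_z-HQ_{zzz}=12A+12A-24A=0$ (the cancellation comes from $Q_{zzz}=24$, not from $Q=-A$). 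There is one slip and one misattribution: at $(\omega_*,0)$ one has $\Lambda_s=2HQ-QH_{zz}-HQ_{zz}=AQ-AQ_{zz}=0$, whereas your $2AQ-AQ_{zz}$ drops the $-r_sr_{zz}$ term and equals $-A^2$, and the $-32n$ comes from the $(n-1)r\,r_s^2$ part of \eqref{eq:evolution2}, not from $(n-1)zr_z$.
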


\begin{proof}[\bf Proof.]

Apply Cauchy-Kowalevski theorem to equation \eqref{eq:evolution2} 
with the above Cauchy data, the existence and uniqueness of solution is proved. Moreover, since the equation \eqref{eq:evolution2} are invariant under rotations, the Cauchy data are symmetric w.r.t the axis $e_2$, the uniqueness of solution yields
\begin{equation}
r(\omega,s)=r(\omega_2,s),\quad s\in [-s_0,s_0].
\end{equation}
where $\omega=(\omega_1,\ldots,\omega_n)\in \mathbb{S}^{n-1}$. we define
\[
z(\omega)=\omega_2.
\]
Fix $\omega\in \mathbb{S}^{n-1}$ with $|z|<1$, since $\nabla z=e_2-z\omega $, we deduce that $|\nabla z|\ne 0$. Then we choose the orthonormal basis on $T_{\omega}\mathbb{S}^{n-1}$ as
\[
E_1=\frac{\nabla z}{|\nabla z|}=\frac{\nabla z}{\sqrt{1-z^2}}, E_2,\cdots, E_{n-1}.
\]
Here at the point $\omega$,  $g_{ij}=\delta_{ij}$ and $\Gamma^{k}_{ij}=0$. Consequently, we deduce that
\begin{equation}\label{0903-14}
z_i z_j=\text{diag}(1-z^2,0,\ldots,0),\quad \nabla^2 z=-z\delta_{ij}.
\end{equation}
By \eqref{0831-1}, we have
\begin{equation}
\nabla r=r_z \nabla z,\quad \nabla^2 r=(r_{ij})=r_{zz}z_{i}z_{j}-zr_{z}\delta_{ij}.
\end{equation}
In particular, if $s=0$, $r(\omega,0)=H(z)$, we have
\begin{equation}\label{0903-17}
\nabla r=H_z(e_2-z\omega),\quad \nabla^2 r=(r_{ij})=H_{zz}z_{i}z_{j}-zH_{z}\delta_{ij}.
\end{equation}
By \eqref{0903-11}, we have
\begin{equation}\label{0903-12}
\Ccurv(\omega,s)=\left(r^2+zrr_{z}\right) \delta_{ij}+\left(2r_z^2-rr_{zz}\right) z_i z_j.
\end{equation}
In particular, if $s=0$, we have
\begin{equation}\label{0903-13}
\Ccurv(\omega,0)=(H^2+zHH_z)\delta_{ij}+(2H^2_z-HH_{zz})z_i z_{j}.
\end{equation}
Therefore, by \eqref{0903-14}, the eigenvalues of $\Ccurv(\omega,0)$ are
\begin{equation}\label{0903-7}
\lambda(\omega,0)=H^2+zHH_z,
\end{equation}
with multiplicity $n-2$, and
\begin{equation}\label{0903-8}
\Lambda(\omega,0)=H^2+zHH_z+(1-z^2)(2H^2_z-HH_{zz}).
\end{equation}
If $z=\pm 1$, we have $\nabla r=0$. Then 
\[
C(\omega,0)=(H^2+zHH_z)\delta_{ij}.
\]
By \eqref{0830-1}, we have
\begin{equation}\label{0903-9}
\left\{
\begin{aligned}
&H_z=Az,\quad H_{zz}=A,\quad H_{zzz}=0;\\
&Q_z=12z^2-Az+4,\quad Q_{zz}=24z-A,\quad Q_{zzz}=24,\quad Q_{zzz}=0.
\end{aligned}
\right.
\end{equation}
In particular, if $z=0$, we have
\begin{equation}\label{0903-10}
\left\{
\begin{aligned}
&H=A,\quad H_z=0,\quad H_{zz}=A,\quad H_{zzz}=0,\quad H_{zzzz}=0;\\
&Q=-A,\quad Q_z=4,\quad Q_{zz}=-A,\quad Q_{zzz}=24,\quad Q_{zzzzz}=0.
\end{aligned}
\right.
\end{equation}
Therefore, by \eqref{0830-1}, \eqref{0903-7}, \eqref{0903-8} and \eqref{0903-9}, we have
\begin{equation}
\left\{
\begin{aligned}
&\lambda(\omega,0)=A^2\left( 1+\frac{z^2}{2}\right) \left( 1+\frac{3z^2}{2}\right)>0,\\
&\Lambda(\omega,0)=\frac{3}{4}A^2\left(6-z^2 \right)z^2\geq 0.
\end{aligned}
\right.
\end{equation}
This proves $(2)$.

If $s=0$, $\omega_{*}=(\omega_1,\cdots,\omega_n)\in \mathbb{S}^{n-1}$ with $\omega_{2}=0$. Obviously, $z(\omega)=0$. By \eqref{0903-8} we have
\begin{equation}
\left\{
\begin{aligned}
& \Lambda(\omega_{*},0)=0,\\
&\Lambda_{z}(\omega_{*},0)=\frac{3}{4}A^2\left( 12z-4z^3\right)= 0,\\
&\Lambda_{zz}(\omega_{*},0)=\frac{3}{4}A^2\left( 12-12z^2\right)=9A^2.
\end{aligned}
\right.
\end{equation}
By \eqref{0903-12}, we have
\begin{equation}
\Lambda(\omega,s)=r^2+zrr_z+(1-z^2)(2r^2_z-rr_{zz}).
\end{equation}
A direct calculation yields that
\begin{equation}\label{0903-16}
\left\{
\begin{aligned}
&\Lambda_{s}(\omega,s)=2rr_{s}+zr_sr_z+zrr_{sz}+(1-z^2)(4r_zr_{sz}-r_sr_{zz}-rr_{szz}),\\
&\Lambda_{ss}(\omega,s)=2rr_{ss}+2r_s^2+zr_zr_{ss}+2zr_{s}r_{sz}+zrr_{ssz}\\
&\qquad \qquad \quad+(1-z^2)(4r_{sz}^2+4r_zr_{ssz}-2r_{s}r_{szz}-r_{ss}r_{zz}-rr_{sszz}).
\end{aligned}
\right.
\end{equation}
Then, by \eqref{0903-15}, \eqref{0903-10} and \eqref{0903-16}, we have
\begin{equation}
\left\{
\begin{aligned}
&\Lambda_{s}(\omega_{*},0)=0,\quad \Lambda_{sz}(\omega_{*},0)=0,\\
&\Lambda_{ss}(\omega_{*},0)=Ar_{ss}-Ar_{sszz}+64.
\end{aligned}
\right.
\end{equation}
By Lemma \ref{prop:evolution2}, \eqref{0903-17} and \eqref{0903-15}, we have
\begin{equation}\label{0903-6}
r_{ss}=\frac{2(1-z^2)QH_z Q_z-Q^2(1-z^2)H_{zz}+(n-1)zQ^2H_z-(H^3-(n-1)H)Q^2}{H^2+(1-z^2)H^2_z}.
\end{equation}
Differentiating \eqref{0903-6} with respect to $z$, then let $z=0$, by \eqref{0903-10}, we have
\begin{equation}
\left\{
\begin{aligned}
&r_{ss}=-A^3+(n-2)A,\quad r_{ssz}=8(A^2-(n-1));\\
&r_{sszz}=-A^3+(n-25)A+32nA^{-1}.
\end{aligned}
\right.
\end{equation}
Then, we deduce that
\[
\Lambda_{ss}(\omega_{*},0)=23A^2-32n+64.
\]
By \eqref{0904-1}, we have $\Lambda_{ss}(\omega_{*},0)>0$.
\end{proof}

\begin{remark}
When $n=2$, one may take $A=8$ and $z=\sin \theta$, $\theta\in [0,2\pi)$, so the Cauchy data coincide with those used in the planar construction. The remaining argument of the proof is analogous to that of Section 3.2 and Section 3.3, we omit its proof. The key point in higher dimensions is that there exists a negative eigenvalue of the second fundamental form at one point, which is sufficient to exclude convexity of the corresponding level set. 
\end{remark}

\subsection{Counterexample for Gaussian  \texorpdfstring{$p$}{p}-Capacity} 

Here, we only give the sketch of the proof, since all the proof is very similar to Section 3 and Section 4.1. Here the main difficulty is that the $p$-Laplacian operator is no longer linear, the linear perturbation is invalid in Section 3.2. We instead perturb the analytic Cauchy data and using the continuous dependence of solution with respect the Cauchy data. See Lemma \ref{0909-1}. We therefore construct a convex ring such that the solution of \eqref{eq:OU-problem-intro} has non-convex level sets.

This view is more simpler and it works. we give more details as follows.

First, similar to Section 3 and Section 4.1, we can prove that under the assumption \eqref{eq:r-signs-preliminary2} for the radial function, the function $u$ defined by \eqref{eq:v-def} satisfies $-L_{p} (u)=0$
if and only if $r$ satisfies the following equation
\begin{align*}
&(p-1)(r^2+|\nabla r|^2)r_{ss}
 -2(p-1) r_s(\nabla r, \nabla r_{s})+r_s^2\Delta_{\mathbb{S}^{n-1}} r\\
 &\quad +(r^3-(n-1)r)r_s^2+ (p-2)\frac{r^2_s}{r^2+|\nabla r|^2}\left( r|\nabla r|^2+\nabla^2r (\nabla r,\nabla r)\right)
=0.
\end{align*}
Second, let
\[
z=z(\omega)=\omega_2,\quad \omega=(\omega_1,\omega_2.\ldots,\omega_n)\in \mathbb{S}^{n-1}.
\]
We choose the Cauchy data
\[
H_b(z)=A\left( 1+\frac{b}{2} z^2\right),\quad  Q(z)=-H(z)+4z(1+z^2),\quad z\in [-1,1],
\]
where $b>1$ is a constant, and $A>\max \left\{ \frac{16}{3},\sqrt{\frac{32n}{23}}\right\}$. By a direct calculation, we can deduce that $\Ccurv(\omega,0)$ has a negative eigenvalue at some points, that is, 
\[
\lambda(\omega_{*},0)=A^2(1-b)<0
\]
with multiplicity $1$, and
\[
\lambda(\omega_{*},0)=A^2>0
\]
with multiplicity $n-2$, where $\omega_{*}=(\omega_1,0,\omega_3,\ldots,\omega_n)\in \mathbb{S}^{n-1}$. Therefore, if $b\leq 1$, the hypersurface has the points with non-positive eigenvalue. 

Third, choose perturbed Cauchy data ad follows
\[
H^{\varepsilon}(z)=A\left( 1+\frac{1+\varepsilon}{2} z^2\right),\quad Q^\varepsilon(z)=-H^{\varepsilon}(z)+4z(1+z^2), \quad \varepsilon \in [0,1].
\]
Applying the Cauchy-Kowalevski Theorem to the radial-graph equation with the above Cauchy date, the existence and uniqueness of solution can be proved by Proposition \ref{0911}. Moreover, the continuous dependence of solution $r^{\varepsilon}$ with respect the Cauchy data is proved in \cite[Theorem~3 and Remark~3]{Walter1985}. See also Proposition \ref{0909}. Indeed, the convergence of the Cauchy data is uniform with respect to $\varepsilon$, Walter's
contraction estimate therefore yields $C^k$ convergence. See the following Lemma \ref{0909-1}.

Finally, by an approximation argument, we can directly prove that there exists a sufficiently small $\varepsilon\in (0,1)$ such that the eigenvalues of $\Ccurv(\omega,s)$ on the both boundary hypersurface are positive. This means that the boundary hypersurface is strong convex. However, when $s=0$, the hypersurface has negative eigenvalue at some points. This implies that the hypersurface at $s=0$ is non-convex. 

In the following, we present the continuous dependence od solution with respect to the perturbed Cauchy data.

\begin{lemma}\label{0909-1}
There exist $\varepsilon_0>0$ and $\delta_0>0$, such that for any $\varepsilon\in [0,\varepsilon_0)$, the unique solution $r^{\varepsilon}$ exists and is well-defined in $\mathbb{S}^{n-1}\times [-\delta_0,\delta_0]$. Moreover, $r^{\varepsilon}\rightarrow r^0$ in $C^{k}(\mathbb{S}^{n-1}\times [-\delta_0,\delta_0])$ for any fixed $k$. Let
$$
\Ccurv^{\varepsilon}(\omega,s):=(r^{\varepsilon})^2+2|\nabla r^{\varepsilon} |^2-r^{\varepsilon} \nabla^2 r^{\varepsilon}.
$$
We conclude that 
\[
\|\Ccurv^{\varepsilon}- \Ccurv^{0}\|:=\mathop{\sup}_{\substack{(\omega,s)\in \\ \mathbb{S}^{n-1}\times [-\delta_0,\delta_0]} } \mathop{\sup}_{\substack{ \xi \in T_{\omega}\mathbb{S}^{n-1}\\|\xi|_g  \leq 1} }  | (\Ccurv^{\varepsilon}- \Ccurv^{0})(\omega,s)[\xi,\xi]|\rightarrow 0,\quad as\ \varepsilon \rightarrow 0. 
\]
\end{lemma}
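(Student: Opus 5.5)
The plan is to use the analytic (Cauchy--Kowalevski) framework uniformly in the parameter $\varepsilon$, and then pass from uniform convergence to $C^k$ convergence by Cauchy estimates on a fixed complex neighbourhood.

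First, I would rewrite the radial-graph equation for $L_p$ as a first-order quasilinear system, in a form covered by Proposition \ref{0909}. Work in local analytic coordinates $y\in B_r(y_0)\subset\R^{n-1}$ on $\mathbb{S}^{n-1}$, and use finitely many charts by compactness. Introduce the unknowns
\[
U=(r,\ r_s,\ \nabla r,\ \nabla r_s),
\]
so that the equation becomes
\[
U_t=\sum_j B_j(U)\,U_{y_j}+c(U).
\]
Here $t=s$, and the coefficients are obtained by solving for $r_{ss}$. This is legitimate because the coefficient of $r_{ss}$ is $(p-1)(r^2+|\nabla r|^2)$ plus the $(p-2)$ term, which does not involve $r_{ss}$. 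Since $r\ge A>0$ near the data, this coefficient is bounded away from zero. The coefficients are analytic on the region $\{r>A/2\}$, so Proposition \ref{0911} applies.

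The Cauchy data $(H^\varepsilon,Q^\varepsilon)$ and their $y$-derivatives are polynomials in $z$ depending affinely on $\varepsilon$. They therefore extend holomorphically to a fixed complex neighbourhood $\mathcal{N}$ of $\mathbb{S}^{n-1}$, independent of $\varepsilon\in[0,1]$, and on $\mathcal{N}$ they satisfy
\[
|\phi^\varepsilon-\phi|\le c_0\varepsilon .
\]
Consequently the bounds $\beta_j,\beta_j',\gamma,\gamma'$ in Proposition \ref{0909} can be chosen uniformly in $\varepsilon$. Then one $\eta>0$ works for all small $\varepsilon$, which gives a common existence domain $G_\eta$ in each chart. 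I would take $\delta_0<\eta\,\min d$ over slightly shrunk balls that still cover the sphere. Uniqueness then glues the chart solutions exactly as in the proof of Lemma \ref{Application-CK}. The same uniqueness, together with rotation invariance, gives the axial symmetry of each $r^\varepsilon$. Proposition \ref{0909} also gives $r^\varepsilon\to r^0$ in $C^0(G_\eta)$.

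The main obstacle is upgrading $C^0$ convergence to $C^k$ convergence. My plan is to apply Walter's theorem in the complexified variables, with $y$ ranging in a complex polydisc and $t$ real. This yields uniform convergence of the holomorphic-in-$y$ solutions on a slightly larger complex domain. Cauchy's integral formula in $y$ then turns uniform convergence into convergence of every $y$-derivative on the real compact set. For $s$-derivatives I would use the equation itself: $\partial_s U$ is an analytic expression in $U$ and $\partial_y U$, and iterating expresses $\partial_s^j\partial_y^\alpha U$ through $y$-derivatives of $U$. Their convergence therefore follows.

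The assertion about $\Ccurv^\varepsilon$ then follows directly from its definition as a polynomial in $r^\varepsilon$, $\nabla r^\varepsilon$, $\nabla^2 r^\varepsilon$. We have
\[
\Ccurv^\varepsilon-\Ccurv^0=(r^\varepsilon-r^0)(r^\varepsilon+r^0)g+2(\nabla r^\varepsilon\otimes\nabla r^\varepsilon-\nabla r^0\otimes\nabla r^0)-(r^\varepsilon-r^0)\nabla^2r^\varepsilon-r^0\nabla^2(r^\varepsilon-r^0).
\]
Each term is bounded by $C\|r^\varepsilon-r^0\|_{C^2}$, with $C$ depending on the uniform $C^2$ bounds. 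Taking the supremum over unit $\xi$ gives $\|\Ccurv^\varepsilon-\Ccurv^0\|\le C\|r^\varepsilon-r^0\|_{C^2}\to0$.
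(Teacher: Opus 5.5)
Your proposal is correct in outline, but it has one slip in the choice of unknowns (second paragraph), and it reaches $C^k$ convergence by a different route from the paper.

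\textbf{Comparison with the paper.} The overall architecture matches the paper: a first-order reduction, and uniform-in-$\varepsilon$ bounds coming from the polynomial data that depend affinely on $\varepsilon$. Then come a common Walter domain, gluing by uniqueness over finitely many charts, and finally the algebraic estimate $\|\Ccurv^\varepsilon-\Ccurv^0\|\le C\|r^\varepsilon-r^0\|_{C^2}$. The difference is the upgrade from $C^0$ to $C^k$. The paper differentiates \eqref{0907-1} in $y^i$, treats $(U,\partial_1U,\dots,\partial_{n-1}U)$ as a new quasilinear system, and re-applies Proposition~\ref{0909}, iterating for higher derivatives. You instead keep Walter's convergence on the complex domain, where the solutions are holomorphic in $y$. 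You then use Cauchy estimates on polydiscs of fixed radius around the real compact set, and recover $s$-derivatives from the equation.

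Your route gives all $k$ at once on a single slab $[-\delta_0,\delta_0]$, which matches the statement's quantifiers ($\delta_0$ chosen before $k$). It also avoids rechecking Walter's hypotheses for enlarged systems. Those systems acquire zeroth-order terms quadratic in $\nabla U$, which may force a smaller $\eta$ at each step, so in the paper's iteration $\delta_0$ could a priori shrink with $k$. This is harmless for the application, which needs only $k=2$. The price of your route is that you need the continuous-dependence conclusion on the complex domain, not merely on its real trace as recorded in Proposition~\ref{0909}. This is how Walter's theorem is actually formulated, so it is legitimate. You should, however, make the margin explicit: take, for instance, $\delta_0<\tfrac12\eta\,(r-r')$, so that a polydisc of fixed radius around every real point of the shrunk ball stays inside $\{|t|<\eta\, d(y)\}$ for $|t|\le\delta_0$.

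\textbf{Correction to the unknowns.} The vector $U=(r,r_s,\nabla r,\nabla r_s)$ does not yield a first-order system. The evolution of the last component is $\partial_s(\nabla r_s)=\nabla(r_{ss})$. Since $r_{ss}$ depends linearly on $\nabla^2 r$, this contains $\nabla^3 r$, which is a second $y$-derivative of the component $\nabla r$ and cannot be written as $\sum_j B_j(U)U_{y_j}$. The fix is to drop $\nabla r_s$ and take $U=(r,r_s,\nabla r)$, as the paper does. Then $\nabla r_s$ and $\nabla^2 r$ enter as $y$-derivatives of the second and third components, and $\partial_s(\nabla r)=\nabla(r_s)$ closes the system. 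The constraint $U_3=\nabla U_1$ propagates because $\partial_s(U_3-\nabla U_1)=0$. With this change the rest of your argument goes through.
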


\begin{proof}[\bf Proof.]
Let $\omega=(\omega_1,\ldots,\omega_n)\in \mathbb{S}^{n-1}$, and let $y=(y^1,\ldots,y^{n-1})$ be a local coordinate system
on $\mathbb S^{n-1}$, so that $\omega=\omega(y)$. We write
\[
\partial_i=\frac{\partial}{\partial y^i},
\qquad 1\leq i\leq n-1.
\]
Define
\[
 U^{\varepsilon}=\left(
r^\varepsilon,r^\varepsilon_s,\partial_1 r^\varepsilon,\ldots,\partial_{n-1}r^\varepsilon
\right)^{\mathsf T}.
\]
Then the radial-graph equation associated with
the Gaussian $p$-capacitary equation is equivalent to the
following first-order quasi-linear system. 
\begin{equation}\label{0907-1}
\partial_sU^\varepsilon
=
\sum_{\ell=1}^{n-1}
B^\ell(y,U^\varepsilon)\,
\partial_\ell U^\varepsilon
+
C(y,U^\varepsilon).
\end{equation}
Here $B^\ell$ is an $(n+1)\times(n+1)$ matrix. 
All entries of $B^\ell$ vanish except
\[
\left\{
\begin{aligned}
\left(B^\ell\right)_{2,2}
={}&
\frac{
2r_s^\varepsilon
g^{a\ell}\partial_a r^\varepsilon
}{
(r^\varepsilon)^2
+
g^{ab}\partial_a r^\varepsilon\partial_b r^\varepsilon
},
\\[3mm]
\left(B^\ell\right)_{2,j+2}
={}&
-\frac{
(r_s^\varepsilon)^2g^{\ell j}
}{
(p-1)
\left[
(r^\varepsilon)^2
+
g^{ab}\partial_a r^\varepsilon\partial_b r^\varepsilon
\right]
}
\\
&-
\frac{
(p-2)(r_s^\varepsilon)^2
\left(g^{\ell a}\partial_a r^\varepsilon\right)
\left(g^{jb}\partial_b r^\varepsilon\right)
}{
(p-1)
\left[
(r^\varepsilon)^2
+
g^{ab}\partial_a r^\varepsilon\partial_b r^\varepsilon
\right]^2
},
\qquad 1\leq j\leq n-1,
\\[3mm]
\left(B^\ell\right)_{i+2,2}
={}&
\delta_i^\ell,
\qquad 1\leq i\leq n-1,
\end{aligned}
\right.
\]
where $1\leq i,j,\ell\leq n-1$. 
The vector $C(y,U^\varepsilon)=(C_1,\ldots,C_{n+1})$ has only two non-zero
components. More precisely, all its components vanish except
\[
\left\{
\begin{aligned}
C_1
={}&r_s^\varepsilon,
\\[3mm]
C_2
={}&
\frac{
(r_s^\varepsilon)^2
g^{ij}\Gamma_{ij}^{k}\partial_k r^\varepsilon
}{
(p-1)
\left[
(r^\varepsilon)^2+
g^{ij}\partial_i r^\varepsilon\partial_j r^\varepsilon
\right]
}
\\[2mm]
&-
\frac{
(p-2)(r_s^\varepsilon)^2
g^{ia}\partial_a r^\varepsilon
\left[
2r^\varepsilon\partial_i r^\varepsilon
+
(\partial_i g^{jk})
\partial_j r^\varepsilon\partial_k r^\varepsilon
\right]
}{
2(p-1)
\left[
(r^\varepsilon)^2+
g^{jk}\partial_j r^\varepsilon\partial_k r^\varepsilon
\right]^2
}
\\[2mm]
&-
\frac{
\left[
(r^\varepsilon)^3-(n-1)r^\varepsilon
\right]
(r_s^\varepsilon)^2
}{
(p-1)
\left[
(r^\varepsilon)^2+
g^{ij}\partial_i r^\varepsilon\partial_j r^\varepsilon
\right]
},
\\[3mm]
C_{i}
={}&0, \quad \qquad 3\leq i\leq n+1.
\end{aligned}
\right.
\]
Repeated indices are summed from $1$ to $n-1$.

Since $g^{ij}$ and $\Gamma_{ij}^{k}$ are real analytic in each
real-analytic coordinate chart of $\mathbb S^{n-1}$, the
matrices $B^\ell(y,U)$ $(\ell=1,\ldots,n-1)$ and the vector $C(y,U)$ are real
analytic if
\[
(r^\varepsilon)^2+g^{ij}\partial_i r^{\varepsilon}\,\partial_j r^\varepsilon\neq0.
\]
Moreover, there exists a constant $c_0>0$ independent of $\varepsilon$ such that
\[
\bigl(H^\varepsilon\bigr)^2
+
\left|
\nabla_{\mathbb S^{n-1}}H^\varepsilon
\right|^2
\geq c_0
\qquad\text{on }\mathbb S^{n-1}.
\]
Therefore, for any $\varepsilon \in (0,\varepsilon_0)$, when $s=0$, one has
\[
\left|
(r^{\varepsilon})^2+g^{ij}\partial_i r^\varepsilon\,\partial_j r^\varepsilon
\right|
\geq \frac{c_0}{2},\quad \mathrm{on}\ \mathbb{S}^{n-1}.
\]
Thus $B^\ell$ and $C$, together with their derivatives with
respect to $y$ and $U^{\varepsilon}$, satisfy the uniform bounds and
Lipschitz estimates required in
Walter~\cite[Theorem~3]{Walter1985}. Since $\mathbb{S}^{n-1}$ is compact, by Proposition \ref{0909}, we conclude that there exists a common region $\mathbb{S}^{n-1}\times [-\delta,\delta]$ such that for any $\varepsilon \in (0,\varepsilon_0)$, there exists a unique real-analytic solution $r^{\varepsilon}$ in $\mathbb{S}^{n-1}\times [-\delta,\delta]$. Moreover, the solution $U^{\varepsilon}\rightarrow U^0$ as $\varepsilon \rightarrow 0$ uniformly in $\mathbb{S}^{n-1}\times[-\delta,\delta]$. This implies the convergence of $r^\varepsilon$, $r_s^\varepsilon$ and $\nabla r^{\varepsilon}$.

Finally, differentiating \eqref{0907-1} with respect to $y^i$ $(i=1,\cdots,n-1)$. applying the same argument to the
resulting system, we obtain that $\nabla^2 r^\varepsilon \rightarrow \nabla^2 r^0 $ as $\varepsilon \rightarrow 0$ uniformly in $\mathbb{S}^{n-1}\times [-\delta,+\delta]$. This implies that $\Ccurv^{\varepsilon}\rightarrow \Ccurv^{0}$ uniformly in $\mathbb{S}^{n-1}\times [-\delta,\delta]$ as $\varepsilon \rightarrow 0$.

The uniform convergence of the higher-order derivatives of $r^\varepsilon$ follows by iterating the same argument.
\end{proof}

\section*{Acknowledgment}
The author would like to thank Professors Andrea Colesanti, Pengfei Guan and Paolo Salani for their patient guidance and warm encouragement. Much of what eventually led to this paper grew out of my time in Florence. I would therefore like this paper to be, in some sense, a tribute to the those in Florence, as well as those closely connected with Florence, who have helped and supported me over the years.

The author acknowledge ChatGPT for performing the complex computations that led to the discovery of the Cauchy data.

\end{document}